\documentclass[12pt]{article}
\usepackage[margin=0.5in]{geometry}
\usepackage{amsmath,amssymb,amsthm}
\usepackage{mathtools}
\usepackage{enumitem}
\usepackage{hyperref}
\hypersetup{colorlinks=true,linkcolor=blue,citecolor=blue,urlcolor=blue}

\theoremstyle{plain}
\newtheorem{theorem}{Theorem}[section]
\newtheorem{lemma}[theorem]{Lemma}

\newtheorem{proposition}[theorem]{Proposition}
\newtheorem{conjecture}[theorem]{Conjecture}
\theoremstyle{definition}
\newtheorem{definition}[theorem]{Definition}
\newtheorem{example}[theorem]{Example}
\theoremstyle{remark}
\newtheorem{remark}[theorem]{Remark}

\DeclareMathOperator{\spec}{spec}
\DeclareMathOperator{\diam}{diam}
\DeclareMathOperator{\Tr}{Tr}
\DeclareMathOperator{\diag}{diag}

\title{The Distance Laplacian and Distance Signless Laplacian Spectra of $\mathcal{C}$-Graphs}
\author{
Santanu Mandal$^{a}$ \and Pallabi Manna$^{b}$
}
\date{
\small
$^{a}$Department of Computer Science and Engineering, SRM University-AP,\\ Amaravati-522240, Andhra Pradesh, India\\
\texttt{santanu.vumath@gmail.com}\\[2mm]
$^{b}$Department of Mathematics, Indian Institute of Technology Hyderabad, \\Kandi, Sangareddy-502284, Telangana, India\\
\texttt{mannapallabimath001@gmail.com}\\[4mm]
\today
}

\begin{document}
\maketitle

\begin{abstract}
Mandal and Mehatari (\emph{Comp.\ Appl.\ Math.}, 2025) introduced the class $\mathcal{C}$ of cographs generated by a finite creation sequence $(\alpha_1,\dots,\alpha_m)$ of natural numbers, and derived the inertia, an extended eigenvalue-free interval, and the exact characteristic polynomial for the \emph{adjacency} matrix of such graphs. In this note we develop the parallel theory for the \emph{distance Laplacian} matrix $D^L(G)$ and \emph{distance signless Laplacian} matrix $D^Q(G)$. It is shown that $(0,\alpha_{\min}) \cup (n-\alpha_{\min}, n)$, $(0,n) \ \cup\ (n,\,n+\alpha_{\min}) \ \cup\ (2n-\alpha_{\min},\,2n)$, $\big(2\Tr_{\min},\ \mu_{\max}(D^Q(G))\big)$ are the eigenvalue-free intervals of the Laplacian, distance Laplacian, and distance signless Laplacian matrices respectively for the said class of graphs.
\end{abstract}
\textbf{Keywords:} Cograph, $\mathcal{C}$-graphs, distance Laplacian matrix, distance signless Laplacian matrix, eigenvalue-free interval, characteristic polynomial, Dilworth number.\\
\textbf{AMS Classification:} 05C50
\section{Introduction}
In this study all graphs are assumed to be connected, simple and finite. We first recall some basic notion of spectral graph theory. Consider the graph $G=(V,E)$. Suppose $\mathcal{D}$ and $A$ denote the diagonal degree matrix and the adjacency matrix of $G$ respectively. The distance matrix $D(G)$ of a connected graph G is the matrix indexed by the vertices of
$G$ where $D_{i,j} = d(v_i, v_j)$ and $d(v_i, v_j)$ denotes the distance between the vertices $v_i$ and $v_j$. The sum of the distances between a vertex $v$ and every other vertex in $G$ is known as the transmission of $v$ and denoted by $\Tr(v)$, that is,

\[
\Tr(v)=\sum_{\substack{v\in V}} d(u,v).
\]
For a graph $G$, the diagonal transmission matrix denoted by $\Tr(G)$ is the diagonal matrix\\ $\diag\{\Tr(v_1), \Tr(v_2), \cdots, \Tr(v_n)\}$.

Then $L=\mathcal{D}-A$, $Q=\mathcal{D}+A$, $D^L(G)=\Tr(G)-D(G)$, $D^Q(G)=\Tr(G)+D(G)$ are called the Laplacian, signless Laplacian, distance Laplacian and distance signless Laplacian matrices respectively. We employ the standard symbols and syntax of spectral graph theory throughout, which align with the text book \cite{FC,GR}.

\medskip
The distance Laplacian matrix was introduced by Aouchiche and Hansen \cite{AouchicheHansen2013} , (see also \cite{ AH,AouchicheHansenDQ2016}) as a distance analogue of the ordinary Laplacian, and its spectral gap (distance-Laplacian algebraic connectivity) is a natural measure of network robustness. The readers are referred to the recent two survey articles by Bilal A. Rather \emph{et al.} \cite{RA,RGW} for a comprehensive overview of $D^L(G)$ and $D^Q(G)$.

The spectral analysis of cograph has received the attention of many experts in recent times \cite{TA,BM,CT,Du, LST,LouLin2020,MM2025,MMS2024} which include the most tractable adjacency matrix, Laplacian matrix, Distance matrix. However, the study of $D^L(G)$ and $D^Q(G)$ for a cograph remains unsolved.

\medskip

A \emph{cograph} is a $P_4$-free graph. Mandal and Mehatari \cite{MM2025} isolated a large, tractable subclass $\mathcal{C}$ of cographs generated by a finite sequence of natural numbers $\alpha_1,\alpha_2,\dots,\alpha_m$ with $\sum_i \alpha_i = n$, via the recurrence
\[
C(\alpha_1) = \overline{K_{\alpha_1}}, \qquad C(\alpha_1,\dots,\alpha_i) = \overline{C(\alpha_1,\dots,\alpha_{i-1}) \cup K_{\alpha_i}} \quad (i \ge 2),
\]
where $\cup$ denotes disjoint union and the overline denotes graph complementation. They call $G = C(\alpha_1,\dots,\alpha_m)$ a \emph{$\mathcal{C}$-graph}, and $(\alpha_1,\dots,\alpha_m)$ its \emph{creation sequence}. For $m$ even, this representation is unique (\cite[Thm.\ 1.1]{MM2025}). Working with the equitable partition induced by the construction blocks, they obtain the inertia of $A(G)$, show that
\[
\left[\tfrac{-1-\sqrt2}{2}, -1\right) \cup (-1,0) \cup \left(0, \tfrac{-1+\sqrt2}{2}\alpha_{\min}\right]
\]
contains no eigenvalue of $A(G)$ other than the trivial ones, and give an exact formula for the characteristic polynomial.

This note asks the analogous question for the \emph{distance Laplacian} $D^L(G) = \Tr(G) - D(G)$, and the \emph{distance signless Laplacian} $D^L(G) = \Tr(G) + D(G)$ where $D(G)$ is the distance matrix and $\Tr(G)$ the diagonal transmission matrix. We show that the creation-sequence structure of $\mathcal{C}$-graphs makes their distance Laplacian and distance signless Laplacian spectrum just as tractable as their adjacency spectrum.

\medskip
We show that every connected $\mathcal{C}$-graph has diameter at most two, which yields a master identity $D^L(G) = 2nI - 2J - L(G)$ relating the distance Laplacian directly to the ordinary Laplacian $L(G)$. We then re-derive, by an independent route, the closed-form creation-sequence-indexed description of $\spec(L(G))$ already established by Mandal, Mehatari and Stani\'c \cite{MMS2024} — needed here in an explicit form suited to the reflection identity above, confirm its (previously known) integrality, and establish that $\mu_{\max}(L(G)) = n$ exactly, with every other nonzero eigenvalue confined to $[\alpha_{\min}, n-\alpha_{\min}]$, where $\alpha_{\min} = \min_i \alpha_i$ (this bound itself is new). Transporting this through the master identity yields an explicit eigenvalue-free region for $D^L(G)$: the nontrivial spectrum avoids $(0,n)$, $(n, n+\alpha_{\min})$, and $(2n-\alpha_{\min}, 2n)$, with an isolated eigenvalue sitting exactly at $n$ — whose exact multiplicity we determine to be $\alpha_1$ when $m=2$ and $1$ when $m\ge4$. We further show that the number of distinct eigenvalues of $D^L(G)$ equals that of $L(G)$ exactly, transferring the sharp bound $m \le s(G) \le 2m-1$ of Mandal, Mehatari and Stani\'c unchanged. We give the exact characteristic polynomial of $D^L(G)$. We then turn to the \emph{distance signless Laplacian} $D^Q(G) = \Tr(G)+D(G)$, for which no master identity is available (the signless Laplacian $Q(G)$, unlike $L(G)$, does not commute with $J$ except for regular graphs). We nonetheless obtain, in closed form, the local (twin-vector) part of $\spec(D^Q(G))$ and an explicit $m\times m$ quotient matrix capturing the rest, together with the resulting characteristic-polynomial reduction; and the bound $s(D^Q(G))\le 2m$. An eigenvalue-free interval $\big(2\Tr_{\min},\ \mu_{\max}(D^Q(G))\big)$ is obtained for $D^Q(G)$. Finally we determine the precise relation between the Dilworth number and eigenvalue multiplicity for $D^L(G)$ and $D^Q(G)$.

\subsection{Organization of the paper}
Some initial concepts are provided in Section 2. The distance Laplacian matrix and its consequences are discussed in Section 3. Section 4 is devoted to the distance signless Laplacian matrix and its properties. Relation between the Dilworth number and the multiplicity of an eigenvalue (of $D^L$ and $D^Q$) is shown in Section 5.

\section{Preliminaries}

Throughout, $G$ is a connected $\mathcal{C}$-graph on $n$ vertices with creation sequence $(\alpha_1,\dots,\alpha_m)$, $n_i = \alpha_1+\cdots+\alpha_i$, and $\alpha_{\min} = \min_{1\le i \le m}\alpha_i$ and $m$ is even. We write $L(G) = \deg(G) - A(G)$ for the ordinary Laplacian and $\spec^*(\cdot)$ for a matrix's spectrum with (exactly) one copy of the eigenvalue $0$ removed.

\begin{lemma}\label{lem:diam2}
Every connected cograph on at least two vertices has diameter at most $2$. Consequently, every connected $\mathcal{C}$-graph satisfies
\[
D(G) = 2(J-I) - A(G),
\]
where $J$ is the all-ones matrix.
\end{lemma}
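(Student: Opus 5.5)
The plan is to use the standard structural fact that the complement of a connected cograph on at least two vertices is disconnected. Equivalently, a connected cograph $G$ with $|V(G)|\ge 2$ is a join $G = G_1 \vee G_2$ of two nonempty graphs. I would cite this as the classical characterization of $P_4$-free graphs: every cograph on at least two vertices has either $G$ or $\overline{G}$ disconnected. For $\mathcal{C}$-graphs it is also immediate from the construction, and I would give this direct argument as a self-contained alternative.

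Indeed, for $m\ge 2$ the graph $G = C(\alpha_1,\dots,\alpha_m)$ is by definition the complement of the disjoint union $C(\alpha_1,\dots,\alpha_{m-1}) \cup K_{\alpha_m}$. Hence $G$ is the join of $\overline{C(\alpha_1,\dots,\alpha_{m-1})}$ and $\overline{K_{\alpha_m}}$, and both parts are nonempty. When $m=1$ the graph is $\overline{K_{\alpha_1}}$, which is connected only if $\alpha_1=1$; that is the trivial one-vertex case. Since the paper assumes $m$ is even, $m\ge 2$ always holds.

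Given the join decomposition $V = V_1 \sqcup V_2$, where every vertex of $V_1$ is adjacent to every vertex of $V_2$, the diameter bound follows by cases. Take distinct $u,v$. If they lie in different parts, they are adjacent, so $d(u,v)=1$. If both lie in $V_1$, pick any $w \in V_2$, which is nonempty; then $u\sim w\sim v$, so $d(u,v)\le 2$. The case where both lie in $V_2$ is symmetric. Thus $\diam(G)\le 2$.

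For the matrix identity, every off-diagonal entry satisfies $d(v_i,v_j)\in\{1,2\}$. The entry equals $1$ exactly when $A_{ij}=1$, and equals $2$ exactly when $A_{ij}=0$. Hence $D_{ij} = 2 - A_{ij}$ for $i\ne j$, while $D_{ii}=0=2(J-I)_{ii}-A_{ii}$. This gives $D(G) = 2(J-I)-A(G)$.

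The only step needing care is the join decomposition. If one insists on the general cograph statement rather than the $\mathcal{C}$-specific construction, one invokes Seinsche's theorem that the complement of a connected $P_4$-free graph on at least two vertices is disconnected. I would state that as a cited fact rather than reprove it, and rely on the construction for the $\mathcal{C}$-graph consequence.
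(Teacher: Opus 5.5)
Your proof is correct and follows the paper's argument. A connected cograph on at least two vertices is a join $G_1\vee G_2$. Vertices in different parts are adjacent, and vertices in the same part have a common neighbour in the other part, so $D_{ij}=2-A_{ij}$ off the diagonal. Your extra step of reading the join $G=\overline{C(\alpha_1,\dots,\alpha_{m-1})}\vee\overline{K_{\alpha_m}}$ directly off the creation sequence, and citing Seinsche for general $P_4$-free graphs, just makes explicit what the paper attributes to ``the defining recursive structure.''
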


\begin{proof}
A connected cograph on $\ge 2$ vertices is, by the defining recursive structure, a join $G = G_1 \vee G_2$ of two smaller cographs. Two vertices in different parts are adjacent; two vertices in the same part are joined by a path of length $2$ through any vertex of the other part. Hence $\diam(G) \le 2$. The formula for $D(G)$ follows immediately: $D_{ij} = 1$ if $ij \in E(G)$ and $D_{ij}=2$ otherwise, for $i \ne j$.
\end{proof}

\begin{lemma}[Master identity]\label{lem:master}
If $G$ is a connected graph with $\diam(G)\le 2$ on $n$ vertices, then
\[
D^L(G) = 2nI - 2J - L(G).
\]
Consequently,
\[
\spec(D^L(G)) = \{0\} \cup \{\, 2n - \mu : \mu \in \spec^*(L(G)) \,\}.
\]
\end{lemma}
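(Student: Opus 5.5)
The plan is to prove the matrix identity entrywise first, and then obtain the spectral statement by simultaneous diagonalization. The identity uses Lemma~\ref{lem:diam2}; the spectral part uses the fact that $L(G)$ commutes with $J$.

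\emph{Step 1: the identity.} Since $\diam(G)\le 2$, the argument of Lemma~\ref{lem:diam2} applies verbatim and gives $D(G)=2(J-I)-A(G)$. Next compute transmissions. A vertex $v$ with degree $d_v$ is at distance $1$ from $d_v$ vertices and at distance $2$ from the remaining $n-1-d_v$ vertices. Hence
\[
\Tr(v)=d_v+2(n-1-d_v)=2n-2-d_v,
\qquad\text{so}\qquad
\Tr(G)=(2n-2)I-\mathcal{D}.
\]
Therefore
\[
D^L(G)=(2n-2)I-\mathcal{D}-2J+2I+A=2nI-2J-(\mathcal{D}-A),
\]
which is the claimed formula.

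\emph{Step 2: the spectrum.} Because every row sum of $L(G)$ is zero, $L\mathbf{1}=0$. By symmetry $\mathbf{1}^{\top}L=0$ as well, so $LJ=JL=0$. Thus $L$, $J$ and $I$ are commuting real symmetric matrices and admit a common orthonormal eigenbasis. Choose such a basis $\{\mathbf{1}/\sqrt n, x_2,\dots,x_n\}$, with $x_2,\dots,x_n\perp\mathbf{1}$ and $Lx_i=\mu_ix_i$.

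On $\mathbf{1}$ we get $D^L\mathbf{1}=(2n-2n-0)\mathbf{1}=0$. On each $x_i$ we have $Jx_i=0$, so $D^Lx_i=(2n-\mu_i)x_i$.

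\emph{Step 3: bookkeeping.} Since $G$ is connected, the eigenvalue $0$ of $L$ is simple and is carried by $\mathbf{1}$. Consequently the multiset $\{\mu_2,\dots,\mu_n\}$ is exactly $\spec^*(L(G))$, and the displayed description of $\spec(D^L(G))$ follows.

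The only delicate point is this multiset accounting: we must remove exactly the copy of $0$ that corresponds to $\mathbf{1}$. Choosing the eigenbasis of $L$ orthogonal to $\mathbf{1}$ handles this cleanly, even without invoking connectivity. Everything else is routine.
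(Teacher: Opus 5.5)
Your proof is correct and follows the paper's argument essentially step for step. The transmission formula $\Tr(v)=2(n-1)-d_v$ gives the matrix identity, and the spectral statement comes from $\mathbf{1}$ being a common eigenvector of $J$ and $L(G)$, with $\mathbf{1}^{\perp}\subseteq\ker J$ spanned by eigenvectors of $L(G)$; your explicit bookkeeping, which removes exactly the copy of $0$ carried by $\mathbf{1}$, just spells out more carefully the step the paper states in one line.
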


\begin{proof}
By Lemma \ref{lem:diam2}, $D(G) = 2J - 2I - A(G)$, so $\Tr_i = \sum_{j\ne i} D_{ij} = 2(n-1) - \deg(i)$, giving $\Tr(G) = 2(n-1)I - \deg(G)$. Then
\[
D^L(G) = \Tr(G) - D(G) = \big[2(n-1)I - \deg(G)\big] - \big[2J-2I-A(G)\big] = 2nI - 2J - \big(\deg(G)-A(G)\big) = 2nI-2J-L(G).
\]
The spectral statement follows because $\mathbf{1}$ is a common eigenvector of $J$ (eigenvalue $n$) and $L(G)$ (eigenvalue $0$), while every eigenvector of $L(G)$ orthogonal to $\mathbf 1$ lies in the kernel of $J$.
\end{proof}

\begin{remark}
Lemma \ref{lem:master} holds for \emph{any} connected diameter-$2$ graph, not just cographs; see \cite{AouchicheHansen2013}. What is special to $\mathcal{C}$-graphs is that $L(G)$ itself is fully computable from the creation sequence, as we show next.
\end{remark}

\section{The distance Laplacian matrix}
In this section we derive the spectral characterization of distance Laplacian matrix for a $\mathcal{C}$-graph.
\subsection{The Laplacian spectrum}

The Laplacian spectrum of $\mathcal{C}$-graphs was already determined exactly by Mandal, Mehatari and Stani\'c \cite{MMS2024}, via a quotient-matrix argument on the equitable partition induced by the creation sequence (paralleling the adjacency-matrix quotient construction of \cite{MM2025}), together with local twin-vertex eigenvector constructions for the remaining eigenvalues. We re-derive this spectrum below by an independent route — unwinding the union and complement recurrence directly rather than passing through the quotient matrix — for two reasons: it keeps this note self-contained, and it produces the spectrum in the explicit ``birth-and-reflect'' closed form (Theorem \ref{thm:closedform}) that is exactly what the master identity of Lemma \ref{lem:master} needs to transport into $D^L(G)$. We verified computationally that the two derivations agree exactly (e.g.\ on the $48$-vertex example of \cite[Example 3.6]{MMS2024}) before using this form throughout the rest of the paper. We refer the survey article by Merris \cite{Merris1995} for a systematics overview of graph Laplacians.

Because $\overline{H_1 \cup H_2} = \overline{H_1} \vee \overline{H_2}$, the defining recurrence for $\mathcal{C}$-graphs unwinds into an explicit alternation of disjoint unions and joins. Combining the standard Laplacian formulas for disjoint union ($\spec(L)$ combines as a multiset) and join ($\spec(L(H_1 \vee H_2)) = \{0, n(H_1)+n(H_2)\} \cup \{n(H_2)+\mu : \mu \in \spec^*(L(H_1))\} \cup \{n(H_1)+\nu : \nu \in \spec^*(L(H_2))\}$) with the complementation rule
\[
\spec(L(\overline{H}))=\{0\}\cup\{\,n(H)-\theta : \theta \in \spec^*(L(H))\,\}
\]
yields a one-step recursion for the Laplacian spectrum of $G_i := C(\alpha_1,\dots,\alpha_i)$.

\begin{theorem}\label{thm:laprecursion}
Let $S_i$ denote the Laplacian spectrum (as a multiset) of $G_i = C(\alpha_1,\dots,\alpha_i)$. Then $S_1 = \{0^{(\alpha_1)}\}$ and, for $i\ge 2$,
\[
S_i = \{0\} \;\cup\; \{\, n_i - \theta : \theta \in S_{i-1} \,\} \;\cup\; \{\, n_{i-1}^{\,(\alpha_i-1)} \,\}.
\]
\end{theorem}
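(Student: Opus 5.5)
We argue by induction on $i$, using the disjoint-union rule and the complementation rule recalled just before the statement. For $i=1$, $G_1=\overline{K_{\alpha_1}}$ is the edgeless graph on $\alpha_1$ vertices, so $L(G_1)=0$ and $S_1=\{0^{(\alpha_1)}\}$.

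For $i\ge 2$ we have $G_i=\overline{H}$ with $H=G_{i-1}\cup K_{\alpha_i}$, a graph on $n_{i-1}+\alpha_i=n_i$ vertices. First we compute $\spec(L(H))$. The Laplacian of a disjoint union is block diagonal, so its spectrum is the multiset union. The complete graph has $\spec(L(K_{\alpha_i}))=\{0\}\cup\{\alpha_i^{(\alpha_i-1)}\}$. Hence
\[
\spec(L(H))=S_{i-1}\cup\{0\}\cup\{\alpha_i^{(\alpha_i-1)}\},
\]
and removing exactly one zero gives $\spec^*(L(H))=S_{i-1}\cup\{\alpha_i^{(\alpha_i-1)}\}$.

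Next we apply the complementation rule with $n(H)=n_i$:
\[
S_i=\{0\}\cup\{n_i-\theta:\theta\in S_{i-1}\}\cup\{(n_i-\alpha_i)^{(\alpha_i-1)}\}.
\]
Since $n_i-\alpha_i=n_{i-1}$, the last block is $\{n_{i-1}^{(\alpha_i-1)}\}$, which is exactly the claimed recursion.

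The only point needing care is the complementation rule. It must be applied to $H$, which is disconnected, so the multiplicity of $0$ in $\spec(L(H))$ exceeds one. We justify the rule in this setting as follows. $L(\overline H)=n_iI-J-L(H)$, and $L(H)$ commutes with $J$ because all its row sums are zero. We can therefore choose an orthonormal eigenbasis of $L(H)$ that contains $\mathbf 1/\sqrt{n_i}$. On $\mathbf 1$ the matrix $L(\overline H)$ gives $0$. Every other basis vector $x\perp\mathbf 1$ with $L(H)x=\theta x$ satisfies $Jx=0$, so $L(\overline H)x=(n_i-\theta)x$. Removing one copy of $0$ for the vector $\mathbf 1$ then yields precisely the stated multiset, with all remaining zeros of $L(H)$ becoming eigenvalues $n_i$.

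As a sanity check, the multiset sizes balance: $1+n_{i-1}+(\alpha_i-1)=n_i$.
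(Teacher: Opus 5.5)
Your proof is correct and follows essentially the paper's route: the paper also gets the recursion from the disjoint-union rule together with the complementation rule $\spec(L(\overline H))=\{0\}\cup\{n(H)-\theta:\theta\in\spec^*(L(H))\}$. It additionally cites the join formula, which your direct complementation of $G_{i-1}\cup K_{\alpha_i}$ makes unnecessary. Your check of the complementation rule for disconnected $H$ via $L(\overline H)=n_iI-J-L(H)$ supplies a step the paper simply takes as standard.
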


Unrolling this recursion gives a fully closed form. Say an eigenvalue is \emph{born} at step $b$: at $b=1$ it is $0$ with multiplicity $\alpha_1$; at each $b \ge 2$ it is $0$ (multiplicity $1$) together with $n_{b-1}$ (multiplicity $\alpha_b - 1$).

\begin{theorem}[Closed form; cf.\ \cite{MMS2024}, Thms.\ 3.1--3.2]\label{thm:closedform}
An eigenvalue born at step $b$ with value $v \in \{0, n_{b-1}\}$ propagates to the final eigenvalue
\[
\lambda(b,v) =
\begin{cases}
v + \displaystyle\sum_{\substack{j=b+2 \\ j \equiv b \ (2)}}^{m} \alpha_j, & m-b \text{ even},\\[3mm]
n_m - v - \displaystyle\sum_{\substack{j=b+2 \\ j\equiv b\ (2)}}^{m-1} \alpha_j, & m-b \text{ odd},
\end{cases}
\]
and $\spec(L(G))$ is exactly the multiset of all such $\lambda(b,v)$, taken with the stated multiplicities.
\end{theorem}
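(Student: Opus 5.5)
The plan is to prove the closed form by induction on the number of steps, unrolling the one-step recursion of Theorem \ref{thm:laprecursion}. That recursion says that each element of $S_{i-1}$ passes into $S_i$ through the affine reflection $f_i(x)=n_i-x$, and that step $i$ adds new elements: $0$ once and $n_{i-1}$ with multiplicity $\alpha_i-1$. Step $1$ instead creates $0^{(\alpha_1)}$.

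First I will label every element of $S_m$ by its birth step $b$ and birth value $v$, using induction on $i$. This gives a multiset identity
\[
S_m=\biguplus_{(b,v)} \{(f_m\circ f_{m-1}\circ\cdots\circ f_{b+1})(v)\},
\]
where the births are counted with the multiplicities stated before the theorem. The reflections act elementwise on a multiset and the union is disjoint, so this bookkeeping is straightforward. As a consistency check, the count is $\alpha_1+\sum_{b\ge2}\bigl(1+(\alpha_b-1)\bigr)=n$.

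The core of the proof is evaluating the composite $F_{b}:=f_m\circ\cdots\circ f_{b+1}$. The key observation is that two consecutive reflections compose to a translation:
\[
f_{i+1}(f_i(x))=n_{i+1}-n_i+x=x+\alpha_{i+1}.
\]
I will group the $k=m-b$ reflections in consecutive pairs $(f_{b+1},f_{b+2}),(f_{b+3},f_{b+4}),\dots$.

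If $k$ is even, every reflection is paired. The pairs contribute translations by $\alpha_{b+2},\alpha_{b+4},\dots,\alpha_m$, which are exactly the indices $j\equiv b \pmod 2$ with $b+2\le j\le m$. This gives the first case of the formula.

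If $k$ is odd, the pairs run only up to index $m-1$, giving $v+\sum_{j\equiv b,\,b+2\le j\le m-1}\alpha_j$. The final unpaired reflection $f_m$ then yields $n_m$ minus that quantity, which is the second case.

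A short induction on $k$ will make this pairing rigorous. The induction shows that $F_b(x)=x+c$ when $k$ is even and $F_b(x)=n_m-x-c$ when $k$ is odd, with $c$ the stated sum. The inductive step from $k$ to $k+2$ absorbs exactly one pair.

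The only potential obstacle is the bookkeeping. I need the parity conditions and the summation limits to line up correctly, especially in the edge cases. When $b=m$ the sum is empty and the zero eigenvalue born at step $m$ stays $0$. When $b=m-1$ the odd case gives $n_m-v$. I will check both cases directly against the recursion. Everything else follows mechanically from Theorem \ref{thm:laprecursion}.
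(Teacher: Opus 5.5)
Your proposal is correct and follows the paper's own argument: unroll the recursion of Theorem~\ref{thm:laprecursion}, track each birth $(b,v)$ through the reflections $\theta\mapsto n_j-\theta$ for $j=b+1,\dots,m$, and observe that consecutive reflections telescope into translations, leaving one reflection through $n_m$ when $m-b$ is odd. Your version is in fact more precise than the paper's sketch. You identify the pair increment exactly as $f_{i+1}(f_i(x))=x+\alpha_{i+1}$, a single $\alpha$ rather than the paper's loosely worded $\alpha_j+\alpha_{j-1}$-type increment, and you make the pairing and the multiset bookkeeping rigorous by induction.
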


\begin{proof}[Proof sketch]
Track the value born at step $b$ through the transformation $\theta \mapsto n_j - \theta$ applied once at each subsequent step $j = b+1,\dots,m$ (Theorem \ref{thm:laprecursion}). This alternating reflection telescopes: two consecutive applications shift the value by $\alpha_{j}+\alpha_{j-1}$-type increments, giving the stated sum over indices of matching parity, with a final reflection through $n_m$ when an odd number of steps remain.
\end{proof}

\begin{theorem}[Integrality; cf.\ Merris \cite{Merris1998} for general cographs]\label{thm:integral}
For every $\mathcal{C}$-graph $G$, $\spec(L(G)) \subset \mathbb{Z}_{\ge 0}$, and hence $\spec(D^L(G)) \subset \mathbb{Z}_{\ge 0}$.
\end{theorem}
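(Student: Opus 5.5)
The plan is to read integrality straight off the recursion of Theorem~\ref{thm:laprecursion}, with nonnegativity supplied by positive semidefiniteness, and then to transport both facts through the master identity of Lemma~\ref{lem:master}.

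\emph{Integrality of $\spec(L(G))$.} We induct on $i$ to show $S_i\subset\mathbb{Z}$. The base case holds since $S_1=\{0^{(\alpha_1)}\}$. For the inductive step, each element of $S_i$ is of one of three kinds: the new $0$; a value $n_i-\theta$ with $\theta\in S_{i-1}\subset\mathbb{Z}$; or the integer $n_{i-1}$. Every $n_j$ is an integer, being a sum of natural numbers, so all three kinds are integers. Alternatively, Theorem~\ref{thm:closedform} shows directly that every $\lambda(b,v)$ is a signed sum of the integers $v$, $n_m$ and $\alpha_j$.

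\emph{Nonnegativity of $\spec(L(G))$.} This follows because $L(G)$ is positive semidefinite: $x^\top L(G)x=\sum_{ij\in E}(x_i-x_j)^2\ge 0$. We could also argue inductively that $S_i\subset[0,n_i]$. The reflection $\theta\mapsto n_i-\theta$ sends $[0,n_{i-1}]$ into $[\alpha_i,n_i]\subset[0,n_i]$, and $n_{i-1}\le n_i$. The PSD argument is shorter, so we use that one.

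\emph{Transfer to $D^L(G)$.} Lemma~\ref{lem:diam2} gives $\diam(G)\le 2$, so Lemma~\ref{lem:master} yields $\spec(D^L(G))=\{0\}\cup\{2n-\mu:\mu\in\spec^*(L(G))\}$. Each $2n-\mu$ is an integer. For nonnegativity we need $\mu\le 2n$. Here we invoke the standard bound $\mu_{\max}(L(G))\le n$, which holds because $nI-L(G)=L(\overline{G})+J$ restricted to $\mathbf 1^\perp$ equals $L(\overline G)\succeq 0$. Alternatively, the inductive bound $S_i\subset[0,n_i]$ from the previous step gives the same conclusion. Hence $2n-\mu\ge n\ge 0$.

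The only point needing any care is this upper bound $\mu\le n$. It is routine, so I expect no real obstacle. The main thing to check is that the multiset bookkeeping in Lemma~\ref{lem:master} removes exactly one zero.
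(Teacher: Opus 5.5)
Your proposal is correct and follows essentially the paper's route: integrality is read off from the creation-sequence description of $\spec(L(G))$ and then carried through the reflection $\mu\mapsto 2n-\mu$ of Lemma~\ref{lem:master}; you induct on the recursion of Theorem~\ref{thm:laprecursion}, whereas the paper cites the closed form of Theorem~\ref{thm:closedform}, which comes to the same thing. Your explicit nonnegativity argument is a useful addition, namely $L(G)\succeq 0$ together with $\mu_{\max}(L(G))\le n$, so that $2n-\mu\ge n$. The paper's proof only checks integrality and leaves the $\mathbb{Z}_{\ge 0}$ part implicit.
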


\begin{proof}
Immediate from Theorem \ref{thm:closedform}: every value $\lambda(b,v)$ is an integer combination of the $\alpha_j \in \mathbb{N}$ and the partial sums $n_i$, hence an integer. (This also follows more generally, and without the explicit closed form, from Merris's theorem \cite{Merris1998} that every cograph is Laplacian-integral — $\mathcal{C}$-graphs being a special case.) Integrality of $\spec(D^L(G))$ follows from Lemma \ref{lem:master}, since $2n-\mu$ is an integer whenever $\mu$ and $n$ are.
\end{proof}

\begin{example}\label{ex:main}
For $G = C(4,3,2,2)$ ($n=11$), Theorem \ref{thm:closedform} gives
\[
\spec(L(G)) = \{0,\,2,\,4,\,6,\,6,\,9,\,9,\,9,\,9,\,9,\,11\},
\]
which was confirmed by direct diagonalization of $L(G)$. By Lemma \ref{lem:master},
\[
\spec(D^L(G)) = \{0\} \cup \{22-2,\,22-4,\,22-6,\,22-6,\,22-9^{(5)},\,22-11\} = \{0,\,11,\,13^{(5)},\,16^{(2)},\,18,\,20\},
\]
also confirmed directly.
\end{example}

\subsection{Eigenvalue-free interval for the distance Laplacian}

We now locate an explicit region free of nontrivial eigenvalues, in the spirit of \cite[Thm.\ 3.1]{MM2025}.

\begin{theorem}\label{thm:maxeig}
For every $\mathcal{C}$-graph $G = C(\alpha_1,\dots,\alpha_m)$ with $m\ge 2$, $\mu_{\max}(L(G)) = n$ exactly.
\end{theorem}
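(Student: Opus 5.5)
The plan is to establish two facts: $n$ is a Laplacian eigenvalue, and no Laplacian eigenvalue exceeds $n$. The upper bound is the standard fact that $\mu_{\max}(L(H)) \le n(H)$ for every graph $H$ on $n(H)$ vertices. It follows from the complementation rule recalled before Theorem \ref{thm:laprecursion}. Indeed, every $\theta \in \spec^*(L(\overline{H}))$ gives an eigenvalue $n(H)-\theta$ of $L(H)$, and $\theta \ge 0$ because $L(\overline H)$ is positive semidefinite. So every nonzero eigenvalue of $L(H)$ has the form $n(H)-\theta \le n(H)$. Alternatively, one can argue directly from the recursion. By induction on $i$ we show $S_i \subseteq [0,n_i]$. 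Assuming $S_{i-1}\subseteq[0,n_{i-1}]$, Theorem \ref{thm:laprecursion} gives $S_i=\{0\}\cup\{n_i-\theta\}\cup\{n_{i-1}^{(\alpha_i-1)}\}$, and each of these values lies in $[0,n_i]$: we have $n_i-\theta\in[n_i-n_{i-1},n_i]=[\alpha_i,n_i]$, and $n_{i-1}\le n_i$. The base case $S_1=\{0^{(\alpha_1)}\}$ is trivial. This already gives $\mu_{\max}(L(G))\le n_m=n$.

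For the lower bound, exhibit $n$ in $S_m$ through the recursion. Since $0\in S_{i-1}$ for every $i\ge2$ (indeed $0$ lies in every $S_j$), the reflected part $\{n_i-\theta:\theta\in S_{i-1}\}$ contains $n_i-0=n_i$. Taking $i=m\ge2$ gives $n_m=n\in S_m$. Structurally, this reflects the fact that $G_m=\overline{G_{m-1}\cup K_{\alpha_m}}$ is the complement of a disconnected graph, and a graph has Laplacian eigenvalue $n$ exactly when its complement is disconnected.

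There is a subtlety in this step: the complementation rule removes exactly one copy of $0$ from $S_{i-1}$ before reflecting. So I must check that $S_{i-1}$, understood as the spectrum of $G_{i-1}\cup K_{\alpha_i}$, still retains a $0$ after that removal. Because $G_{i-1}\cup K_{\alpha_i}$ has at least two components, its Laplacian has $0$ with multiplicity at least $2$, so a copy of $0$ survives and maps to $n_i$. I expect this bookkeeping about which copy of $0$ is deleted to be the only real obstacle. I will state it explicitly by reading the recursion of Theorem \ref{thm:laprecursion} as coming from the union $G_{i-1}\cup K_{\alpha_i}$, whose spectrum is $S_{i-1}\cup\{0\}\cup\{\alpha_i^{(\alpha_i-1)}\}$. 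Applying the complement rule to this union, the reflected copy of $0$ gives $n_i$, and the values $\alpha_i$ reflect to $n_{i-1}$. Combining the two parts gives $\mu_{\max}(L(G))=n$.
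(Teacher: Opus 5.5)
Your proof is correct and is essentially the paper's argument. The paper observes that $\overline{G_m}=G_{m-1}\cup K_{\alpha_m}$ is disconnected and quotes the standard fact that $\mu_{\max}(L(G))=n$ if and only if $\overline{G}$ is disconnected. You instead unpack that fact through the complementation rule: the upper bound comes from positive semidefiniteness of $L(\overline{H})$, and the lower bound from the extra zero of the disconnected complement reflecting to $n$. Your worry about which copy of $0$ is deleted is already absorbed in the recursion of Theorem \ref{thm:laprecursion}, whose reflected part runs over all of $S_{i-1}$, so $n_m-0=n\in S_m$ is immediate.
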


\begin{proof}
$G_m = H \vee \overline{K}_{\alpha_m}$ where $H = \overline{G_{m-1}}$, so $\overline{G_m} = G_{m-1} \cup K_{\alpha_m}$, a genuine disjoint union of two nonempty graphs, hence disconnected. Since $\mu_{\max}(L(G)) = n$ if and only if $\overline{G}$ is disconnected, the claim follows.
\end{proof}

\begin{theorem}[Eigenvalue-free interval, ordinary Laplacian]\label{thm:Lgap}
For every $\mathcal{C}$-graph $G=C(\alpha_1,\dots,\alpha_m)$ with $m\ge 2$, every eigenvalue of $L(G)$ other than $0$ and the maximum eigenvalue $n$ lies in $[\alpha_{\min}, n-\alpha_{\min}]$. Equivalently, $(0,\alpha_{\min}) \cup (n-\alpha_{\min}, n)$ is eigenvalue-free.
\end{theorem}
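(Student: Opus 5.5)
The plan is to prove a slightly stronger statement for every prefix graph $G_i = C(\alpha_1,\dots,\alpha_i)$ by induction on $i$, using only the one-step recursion of Theorem \ref{thm:laprecursion}. Write $a_i = \min\{\alpha_1,\dots,\alpha_i\}$, so that $a_m = \alpha_{\min}$. The inductive claim is:
\[
S_i \subseteq \{0,\ n_i\} \cup [a_i,\ n_i - a_i] \qquad (i \ge 1).
\]
For $i = m$ this says that every eigenvalue of $L(G)$ other than $0$ and $n = n_m$ lies in $[\alpha_{\min}, n - \alpha_{\min}]$. Since Theorem \ref{thm:maxeig} identifies $n$ as the maximum eigenvalue, this is exactly Theorem \ref{thm:Lgap}; the open intervals $(0,\alpha_{\min})$ and $(n-\alpha_{\min},n)$ are then eigenvalue-free. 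The claim deliberately allows $n$ to occur with multiplicity greater than one. This is necessary: already for $m=2$ the recursion gives $S_2 = \{0,\ n^{(\alpha_1)},\ \alpha_1^{(\alpha_2-1)}\}$.

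The base case $i=1$ is immediate, since $S_1 = \{0^{(\alpha_1)}\}$. For the inductive step with $i \ge 2$, I use
\[
S_i = \{0\} \cup \{n_i - \theta : \theta \in S_{i-1}\} \cup \{n_{i-1}^{(\alpha_i - 1)}\}
\]
and check each type of element in turn.

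\begin{itemize}
\item The new $0$ is allowed by the claim.
\item The new copies of $n_{i-1}$ satisfy $n_{i-1} \ge \alpha_1 \ge a_i$ and $n_{i-1} = n_i - \alpha_i \le n_i - a_i$, so they lie in $[a_i, n_i - a_i]$.
\item A reflected value $n_i - \theta$ with $\theta \in S_{i-1}$ is handled by the inductive hypothesis, in three subcases:
  \begin{itemize}
  \item If $\theta = 0$, it gives $n_i$, which is allowed.
  \item If $\theta = n_{i-1}$, it gives $\alpha_i$. This lies in $[a_i, n_i - a_i]$ because $\alpha_i \ge a_i$ and $n_i - \alpha_i = n_{i-1} \ge a_i$.
  \item If $\theta \in [a_{i-1}, n_{i-1} - a_{i-1}]$, then $n_i - \theta \in [\alpha_i + a_{i-1},\ n_i - a_{i-1}]$. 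This interval is contained in $[a_i, n_i - a_i]$ because $a_i \le a_{i-1}$.
  \end{itemize}
\end{itemize}

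This closes the induction. An alternative route would read the bound off the closed form of Theorem \ref{thm:closedform} by bounding the parity sums, but the induction avoids any parity case analysis.

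I expect no serious obstacle. The only point needing care is bookkeeping: the reflection $\theta \mapsto n_i - \theta$ sends the top value $n_{i-1}$ of $S_{i-1}$ to the small value $\alpha_i$, and sends $0$ to the new top value $n_i$. This is why the claim must single out both endpoints $0$ and $n_i$, rather than attempting a single interval bound, at every stage of the induction.
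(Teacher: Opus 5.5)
Your proof is correct and is essentially the paper's argument. Both induct on the number of blocks, peel off the last block, and reflect the previous spectrum through $n_i$; you simply run this through the combined recursion of Theorem~\ref{thm:laprecursion} rather than applying the join and complementation formulas separately. Carrying the endpoint set $\{0,n_i\}$ from $i=1$ onward and checking $\theta=0\mapsto n_i$ and $\theta=n_{i-1}\mapsto\alpha_i$ explicitly makes your bookkeeping tighter than the paper's. The paper needs a separately verified base case $m=2$, and it calls $n_{m-1}$ the single eigenvalue of that value of $L(G_{m-1})$, although it has multiplicity $\alpha_1$ when $m-1=2$.
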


\begin{proof}
We induct on $m$. Write $G_m = H \vee \overline K_{\alpha_m}$, $H=\overline{G_{m-1}}$, $n(H)=n_{m-1}$. By the join formula,
\[
\spec^*(L(G_m)) \setminus \{n_m\} = \{n_{m-1}\}^{(\alpha_m-1)} \cup \{\alpha_m + \mu : \mu \in \spec^*(L(H))\}.
\]
\emph{Lower bound.} Since $\mu \ge 0$ always, $\alpha_m + \mu \ge \alpha_m \ge \alpha_{\min}$, and $n_{m-1} \ge \alpha_{\min}$ trivially (it is a sum of at least one $\alpha_j$). This gives the lower bound with no induction required.

\emph{Upper bound.} By the inductive hypothesis applied to $G_{m-1}$ (with $\beta := \min(\alpha_1,\dots,\alpha_{m-1}) \ge \alpha_{\min}$), every eigenvalue of $L(G_{m-1})$ other than $0$ and $n_{m-1}$ lies in $[\beta, n_{m-1}-\beta]$. Applying the complementation rule $\spec(L(H)) = \{0\} \cup \{n_{m-1}-\theta : \theta \in \spec^*(L(G_{m-1}))\}$, the interval $[\beta,n_{m-1}-\beta]$ is symmetric under $\theta \mapsto n_{m-1}-\theta$ and hence maps to itself; the single eigenvalue $\theta = n_{m-1}$ of $G_{m-1}$ maps to $0$ (consistent with $H$ being disconnected). Thus every nonzero eigenvalue of $H$ satisfies $\mu \le n_{m-1}-\beta \le n_{m-1}-\alpha_{\min}$, so $\alpha_m + \mu \le n_{m-1}+\alpha_m-\alpha_{\min} = n_m - \alpha_{\min}$, completing the induction. The base case $m=2$ is verified directly.
\end{proof}

\begin{theorem}[Main result: eigenvalue-free interval for $D^L$]\label{thm:main}
For every connected $\mathcal{C}$-graph $G=C(\alpha_1,\dots,\alpha_m)$ ($m\ge2$) on $n$ vertices,
\[
\spec^*(D^L(G)) \subseteq \{n\} \cup [\,n+\alpha_{\min},\ 2n-\alpha_{\min}\,],
\]
so that
\[
(0,n) \ \cup\ (n,\,n+\alpha_{\min}) \ \cup\ (2n-\alpha_{\min},\,2n)
\]
is an eigenvalue-free region for $D^L(G)$, with $n$ itself always an isolated eigenvalue.
\end{theorem}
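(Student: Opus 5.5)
The plan is to transport Theorems \ref{thm:maxeig} and \ref{thm:Lgap} through the master identity of Lemma \ref{lem:master}. Since $G$ is a connected $\mathcal{C}$-graph, Lemma \ref{lem:diam2} gives $\diam(G)\le 2$. Hence Lemma \ref{lem:master} applies, and
\[
\spec(D^L(G)) = \{0\}\cup\{\,2n-\mu : \mu\in\spec^*(L(G))\,\}.
\]
So $\spec^*(D^L(G))$ is the image of $\spec^*(L(G))$ under the orientation-reversing affine map $\phi(\mu)=2n-\mu$. The whole proof then reduces to locating $\spec^*(L(G))$ and applying $\phi$.

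First, by connectedness $0$ is a simple eigenvalue of $L(G)$. Therefore $\spec^*(L(G))$ consists only of positive eigenvalues, namely the nonzero Laplacian eigenvalues. By Theorem \ref{thm:maxeig}, $n\in\spec^*(L(G))$ and every element is at most $n$. By Theorem \ref{thm:Lgap}, every element other than $n$ lies in $[\alpha_{\min},n-\alpha_{\min}]$. Applying $\phi$:
\begin{itemize}
\item the eigenvalue $n$ goes to $2n-n=n$;
\item the interval $[\alpha_{\min},n-\alpha_{\min}]$ goes to $[n+\alpha_{\min},\,2n-\alpha_{\min}]$.
\end{itemize}
This yields $\spec^*(D^L(G))\subseteq\{n\}\cup[n+\alpha_{\min},2n-\alpha_{\min}]$, with $n$ actually attained. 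Note that the single $0$ removed in $\spec^*(D^L(G))$ is the $0$ arising from $\mathbf 1$ in the lemma. The complement of this set within $(0,2n)$ is exactly $(0,n)\cup(n,n+\alpha_{\min})\cup(2n-\alpha_{\min},2n)$, which is therefore eigenvalue-free.

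For the claim that $n$ is an \emph{isolated} eigenvalue, I would note that $\alpha_{\min}\ge 1$. Hence the nearest other nonzero eigenvalues are at distance at least $\alpha_{\min}>0$ above $n$, and there are none below $n$ except $0$. This also requires $n+\alpha_{\min}\le 2n-\alpha_{\min}$, i.e.\ $2\alpha_{\min}\le n$, which holds for $m\ge2$; if no other eigenvalues exist the interval statement is vacuous anyway.

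The only step needing care is bookkeeping at the boundary. One must make sure Theorem \ref{thm:Lgap} really covers all nonzero eigenvalues except a copy of $n$, including possible repeated copies of $n$. Theorem \ref{thm:Lgap} states ``other than $0$ and the maximum eigenvalue $n$'', and any additional copies of $n$ would lie outside $[\alpha_{\min},n-\alpha_{\min}]$. I therefore read it as permitting several copies of $n$, which all map to $n$ under $\phi$ and still lie in $\{n\}$. The containment is thus unaffected by multiplicity, so the argument goes through without resolving the multiplicity question, which the paper treats separately.
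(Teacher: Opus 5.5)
Your proposal is correct and matches the paper's proof: it applies the reflection $\mu\mapsto 2n-\mu$ of Lemma \ref{lem:master} to Theorems \ref{thm:maxeig} and \ref{thm:Lgap}. This sends $n\mapsto n$ and maps $[\alpha_{\min},n-\alpha_{\min}]$ onto $[n+\alpha_{\min},2n-\alpha_{\min}]$. Your extra bookkeeping is sound and spells out what the paper's one-line proof leaves implicit: you read Theorem \ref{thm:Lgap} as excluding every copy of $n$, which matters because $n$ has multiplicity $\alpha_1$ in $L(G)$ when $m=2$, and you check that $2\alpha_{\min}\le n$.
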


\begin{proof}
Immediate from Theorems \ref{thm:maxeig} and \ref{thm:Lgap} via the reflection $\mu \mapsto 2n-\mu$ of Lemma \ref{lem:master}: $\mu=n \mapsto n$, and $\mu \in [\alpha_{\min}, n-\alpha_{\min}] \mapsto 2n-\mu \in [n+\alpha_{\min}, 2n-\alpha_{\min}]$.
\end{proof}

\begin{example}
Continuing Example \ref{ex:main} ($n=11$, $\alpha_{\min}=2$): Theorem \ref{thm:main} predicts $\spec^*(D^L) \subseteq \{11\}\cup[n+\alpha_{\min},\,2n-\alpha_{\min}] = \{11\}\cup[13,20]$. The actual spectrum $\{11,\,13^{(5)},\,16^{(2)},\,18,\,20\}$ lies entirely within this predicted region, and the bound is tight: the top eigenvalue $20 = 2n-\alpha_{\min}$ is attained exactly.
\end{example}



\subsection{Counting the multiplicity of a given eigenvalue}

Theorem \ref{thm:closedform} does more than describe the spectrum as a set: it gives an explicit algorithm for the multiplicity of any prescribed value, and in particular lets us sharpen ``isolated'' in Theorem \ref{thm:main} into an exact multiplicity.

\begin{theorem}[Multiplicity formula]\label{thm:multformula}
Let $G=C(\alpha_1,\dots,\alpha_m)$ be a connected $\mathcal{C}$-graph on $n$ vertices, and let $\theta \in \mathbb{R}$. Then $\mathrm{mult}_{D^L(G)}(0)=1$, and for $\theta \ne 0$, writing $\mu = 2n-\theta$,
\[
\mathrm{mult}_{D^L(G)}(\theta) \;=\; \sum_{\substack{(b,v)\; :\; \lambda(b,v) = \mu}} \mathrm{mult}(b,v),
\]
the sum ranging over all births $(b,v,\mathrm{mult}(b,v))$ of Theorem \ref{thm:closedform} whose propagated value $\lambda(b,v)$ equals $\mu$. This determines $\mathrm{mult}_{D^L(G)}(\theta)$ in $O(m)$ arithmetic operations, with no matrix computation required.
\end{theorem}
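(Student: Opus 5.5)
The plan is to combine Lemma \ref{lem:master} with Theorem \ref{thm:closedform}; no new spectral input should be needed.

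\emph{Step 1: the eigenvalue $0$.} By Lemma \ref{lem:master}, $\spec(D^L(G))$ consists of one copy of $0$ together with the reflected values $2n-\mu$ for $\mu\in\spec^*(L(G))$. Since $G$ is connected, $0$ is a simple eigenvalue of $L(G)$. Hence every $\mu\in\spec^*(L(G))$ is positive. By Theorem \ref{thm:maxeig}, every such $\mu$ is also at most $n$, so $2n-\mu\ge n>0$. Thus no reflected value equals $0$, and $\mathrm{mult}_{D^L(G)}(0)=1$.

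\emph{Step 2: nonzero $\theta$.} The map $\mu\mapsto 2n-\mu$ is a bijection of $\mathbb{R}$. Lemma \ref{lem:master} is a multiset statement, so for $\theta\neq0$ it gives
\[
\mathrm{mult}_{D^L(G)}(\theta)=\mathrm{mult}_{\spec^*(L(G))}(2n-\theta).
\]
If $\mu=2n-\theta\neq0$, then $\mu$ has the same multiplicity in $\spec^*(L(G))$ as in $\spec(L(G))$. If $\mu=0$, i.e.\ $\theta=2n$, then $\mu$ occurs with multiplicity $1$ in $\spec(L(G))$ and $0$ in $\spec^*(L(G))$. In that case the right-hand side of the formula should be read with the removed zero discarded, meaning the birth $(b,v)=(m,0)$, which propagates to $\lambda=0$ (the case $m-b$ even). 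Equivalently, the sum can be taken only over births with $\lambda(b,v)\neq0$; this point deserves a remark. Theorem \ref{thm:closedform} states that $\spec(L(G))$ is exactly the multiset union of the values $\lambda(b,v)$, each with multiplicity $\mathrm{mult}(b,v)$. Therefore the multiplicity of $\mu$ equals $\sum_{\lambda(b,v)=\mu}\mathrm{mult}(b,v)$, which is the stated formula.

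\emph{Step 3: complexity.} There are at most $2m-1$ births: $(1,0)$, and for each $b\ge2$ the pairs $(b,0)$ and $(b,n_{b-1})$. First precompute the partial sums $n_i$ in $O(m)$ operations. Also precompute the alternating-parity suffix sums $\sum_{j\ge b+2,\ j\equiv b\ (2)}\alpha_j$ by backward recursion: $s_b=\alpha_{b+2}+s_{b+2}$, with the upper limit truncated to $m-1$ in the odd case. After this, each $\lambda(b,v)$ costs $O(1)$. Comparing each value with $\mu$ and summing the multiplicities then takes $O(m)$ operations in total.

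The main obstacle is purely bookkeeping. One must make sure the zero removed in $\spec^*$ matches exactly one birth with $\lambda=0$. By Theorem \ref{thm:closedform} and connectivity, that birth is $(m,0)$, so the case $\theta=2n$ is handled correctly. One must also check that the parity-indexed suffix sums can really be computed in linear time, which is what the recursion in Step 3 ensures.
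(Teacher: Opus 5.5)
Your proposal is correct and follows the paper's own argument: reflect through Lemma~\ref{lem:master}, then read multiplicities off the birth multiset of Theorem~\ref{thm:closedform}. You also supply checks the paper leaves implicit, namely $\mathrm{mult}(0)=1$ via $\mu\le n$ and the linear-time evaluation of the parity sums.

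Your caveat at $\theta=2n$ is a genuine correction, not just bookkeeping. The paper's one-line proof asserts equality with the $L(G)$-multiplicity of $\mu=2n-\theta$ for every $\theta\neq0$. But the birth $(m,0)$ propagates to $\lambda=0$, so the formula as stated returns $1$ at $\theta=2n$, whereas $2n\notin\spec(D^L(G))$. The statement therefore needs exactly the exclusion you propose.
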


\begin{proof}
Immediate from Lemma \ref{lem:master} ($\theta\ne0$ is a $D^L(G)$-eigenvalue of multiplicity equal to that of $\mu=2n-\theta$ as an $L(G)$-eigenvalue) together with Theorem \ref{thm:closedform}, which expresses $\spec(L(G))$ exactly as the multiset of propagated birth values.
\end{proof}

\begin{theorem}\label{thm:multofn}
For a connected $\mathcal{C}$-graph $G=C(\alpha_1,\dots,\alpha_m)$, $m\ge2$ even,
\[
\mathrm{mult}_{D^L(G)}(n) = \begin{cases} \alpha_1 & m=2,\\ 1 & m\ge 4.\end{cases}
\]
\end{theorem}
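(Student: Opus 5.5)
The plan is to reduce the question to the Laplacian and then count connected components of the complement. By Lemma \ref{lem:master}, every nonzero eigenvalue $\theta$ of $D^L(G)$ has the same multiplicity as the eigenvalue $2n-\theta$ of $L(G)$. Since $n\neq 0$, this gives
\[
\mathrm{mult}_{D^L(G)}(n)=\mathrm{mult}_{L(G)}(n).
\]
Next I would invoke the complementation rule recalled before Theorem \ref{thm:laprecursion}: $\spec(L(\overline G))=\{0\}\cup\{n-\theta:\theta\in\spec^*(L(G))\}$. It follows that the multiplicity of $n$ in $L(G)$ equals the multiplicity of $0$ in $L(\overline G)$ minus one. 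That in turn is $c(\overline G)-1$, where $c(\cdot)$ denotes the number of connected components. This is the same fact already used in the proof of Theorem \ref{thm:maxeig}, now with multiplicity kept track of.

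So everything comes down to computing $c(\overline{G_m})$. From the recurrence, $\overline{G_m}=G_{m-1}\cup K_{\alpha_m}$. The clique $K_{\alpha_m}$ is a single component, so
\[
c(\overline{G_m})=c(G_{m-1})+1 .
\]

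For $m=2$, the graph $G_1=\overline{K_{\alpha_1}}$ is edgeless on $\alpha_1$ vertices, so $c(G_1)=\alpha_1$. Hence $c(\overline{G_2})=\alpha_1+1$, and the multiplicity is $\alpha_1$.

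For $m\ge 4$ we have $m-1\ge 3\ge 2$, so $G_{m-1}=\overline{G_{m-2}\cup K_{\alpha_{m-1}}}$. This is the complement of a disconnected graph (both parts are nonempty), hence connected, because the complement of a disconnected graph is a join. Thus $c(G_{m-1})=1$, so $c(\overline{G_m})=2$, and the multiplicity is $1$.

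As a cross-check, I would also read the answer off Theorem \ref{thm:closedform} by finding all births $(b,v)$ with $\lambda(b,v)=n$. The value $n$ should arise only from $b=1,v=0$ when $m$ is even, using $\sum_{j\ \mathrm{odd}\ge 3}\alpha_j$. For $m=2$ that route would seem to give $\lambda=0$ instead, so the parity bookkeeping needs care. For that reason I prefer the component-counting argument, which avoids it entirely.

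The only real subtlety is the step $\mathrm{mult}_{L(G)}(n)=c(\overline G)-1$. It needs the complementation rule applied with exactly one copy of $0$ removed, which is precisely what $\spec^*$ does, together with the observation that $n$ never coincides with the trivial eigenvalue $0$. Neither point should cause trouble.
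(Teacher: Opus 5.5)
Your argument is correct, but it takes a different route from the paper. Both proofs start the same way: Lemma~\ref{lem:master} reduces the question to $\mathrm{mult}_{L(G)}(n)$.

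The paper then works inside the closed form of Theorem~\ref{thm:closedform}. It shows that no birth with $m-b$ even can reach $n_m$. For $m-b$ odd, reaching $n_m$ requires $v=0$ and an empty parity sum, which forces the unique birth $b=m-1$, $v=0$. That birth has multiplicity $1$ for $m\ge4$ and $\alpha_1$ for $m=2$, where $b=1$ is the base step.

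You instead use the complementation rule to get $\mathrm{mult}_{L(G)}(n)=c(\overline G)-1$. You then read $c(\overline{G_m})=c(G_{m-1})+1$ directly off the recurrence, using that $G_1$ is edgeless and $G_{m-1}$ is a join for $m\ge3$.

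What your route buys:
\begin{itemize}
\item It is shorter, and it is just the multiplicity version of the argument in Theorem~\ref{thm:maxeig}.
\item It does not depend on Theorem~\ref{thm:closedform}, which the paper only proves in sketch.
\item It shows the answer is $1$ for every $m\ge3$, whatever the parity of $m$.
\item In fact it gives $\mathrm{mult}_{D^L(G)}(n)=c(\overline G)-1$ for any connected graph of diameter at most $2$.
\end{itemize}
The paper's route, by contrast, illustrates the birth-counting algorithm of Theorem~\ref{thm:multformula} and identifies which birth produces the eigenvalue.

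Your ``cross-check'' paragraph is wrong and should be dropped or fixed. The value $n$ comes from the birth $b=m-1$, $v=0$, not from $b=1$. For $m\ge4$ you get $\lambda(1,0)=n-\alpha_3-\alpha_5-\cdots-\alpha_{m-1}<n$. For $m=2$, $m-b=1$ is odd and the parity sum is empty, so $\lambda(1,0)=n_2=n$, not $0$. The two routes therefore agree exactly, with no parity anomaly. Since your proof does not rely on this aside, its correctness is unaffected.
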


\begin{proof}
By Lemma \ref{lem:master}, $\theta=n$ reflects to $\mu=2n-n=n=n_m$, so we must count births with $\lambda(b,v)=n_m$. If $m-b$ is even, $\lambda(b,v)=v+\Sigma \le n_{b-1} + \sum_{j=b+2}^{m}\alpha_j \le n_m - \alpha_b - \alpha_{b+1} < n_m$ (using $b\le m-2$, so both $\alpha_b,\alpha_{b+1}$ exist and are $\ge1$; the case $b=m$ gives $\lambda=v\in\{0,n_{m-1}\}$, neither equal to $n_m$), so no even-step birth reaches $n_m$. If $m-b$ is odd, $\lambda(b,v)=n_m-v-\Sigma=n_m$ forces $v=0$ and $\Sigma=0$; the latter requires the summation range $j=b+2,\dots,m-1$ (matching parity) to be empty, i.e.\ $b\ge m-2$, which combined with $m-b$ odd forces $b=m-1$ uniquely. For $m\ge4$, $b=m-1\ge3\ge2$ lies in the generic regime of Theorem \ref{thm:closedform}, where the $v=0$ birth at step $b$ always carries multiplicity $1$. For $m=2$, $b=m-1=1$ coincides with the base case $b=1$, whose birth $(v=0)$ carries multiplicity $\alpha_1$ instead — recovering the classical complete split graph spectrum $(\alpha_1+\alpha_2)^{\alpha_1},\,\alpha_1^{\,\alpha_2-1},\,0$ for $L(C(\alpha_1,\alpha_2))$.
\end{proof}

\begin{example}
For $G=C(4,3,2,2)$, Theorem \ref{thm:multformula} recovers every multiplicity in Example \ref{ex:main} directly: e.g.\ $\theta=13$ reflects to $\mu=22-13=9$, and Theorem \ref{thm:closedform} assigns multiplicity $5$ to $\mu=9$, matching $\spec(D^L(G))$ above. Since $m=4\ge4$, Theorem \ref{thm:multofn} predicts $\mathrm{mult}_{D^L(G)}(n)=\mathrm{mult}_{D^L(G)}(11)=1$, also matching. By contrast, for $C(3,5)$ ($m=2$, $n=8$), Theorem \ref{thm:multofn} predicts $\mathrm{mult}_{D^L(G)}(8)=\alpha_1=3$, confirmed by direct computation.
\end{example}

\begin{remark}
Theorem \ref{thm:multofn} shows that the maximum eigenvalue $\theta=n$, while always \emph{isolated} (separated from the rest of the spectrum by the gap $(n,n+\alpha_{\min})$), is \emph{simple} only for $m\ge4$; for $m=2$ its multiplicity equals $\alpha_1$ and can be arbitrarily large. This refines the parenthetical remark in Theorem \ref{thm:main}.
\end{remark}

\subsection{Number of distinct eigenvalues}

We write $s(G)$ for the number of distinct eigenvalues of a matrix associated with $G$ (context will make clear whether we mean $L(G)$ or $D^L(G)$). Mandal, Mehatari and Stani\'c \cite{MMS2024} proved that for a $\mathcal{C}$-graph, $m \le s(L(G)) \le 2m-1$, with both bounds sharp. This transfers to $D^L(G)$ immediately and exactly, with no adjustment to the bounds.

\begin{theorem}\label{thm:distincteig}
For every connected $\mathcal{C}$-graph $G=C(\alpha_1,\dots,\alpha_m)$, $s(D^L(G)) = s(L(G))$. Consequently,
\[
m \;\le\; s(D^L(G)) \;\le\; 2m-1,
\]
and both bounds are sharp.
\end{theorem}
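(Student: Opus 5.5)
The plan is to deduce the statement directly from the master identity, Lemma \ref{lem:master}, which gives
\[
\spec(D^L(G)) = \{0\} \cup \{\,2n-\mu : \mu\in\spec^*(L(G))\,\}.
\]
The map $\mu\mapsto 2n-\mu$ is injective. So the distinct values in $\spec^*(L(G))$ correspond bijectively to the distinct values in the reflected multiset. Since $0\in\spec^*(L(G))$ only if $G$ is disconnected, every element of $\spec^*(L(G))$ is positive, and by Theorem \ref{thm:maxeig} it is at most $n$. Hence every reflected value $2n-\mu$ lies in $[n,2n)$ and is in particular nonzero. Therefore the eigenvalue $0$ of $D^L(G)$ is new relative to the reflected part, and
\[
s(D^L(G)) = 1 + \#\{\text{distinct values of }\spec^*(L(G))\}.
\]

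Next, I will compute $s(L(G))$ in the same way. Because $G$ is connected, $0$ is a simple Laplacian eigenvalue, so removing one copy of $0$ removes the value $0$ entirely. This gives $s(L(G)) = 1 + \#\{\text{distinct values of }\spec^*(L(G))\}$, and comparing with the previous display yields $s(D^L(G))=s(L(G))$.

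The bounds $m\le s(D^L(G))\le 2m-1$ and their sharpness then transfer verbatim from the result of Mandal, Mehatari and Stani\'c quoted before the theorem. Any $\mathcal{C}$-graph attaining either bound for $L(G)$ attains it for $D^L(G)$, since the equality $s(D^L(G))=s(L(G))$ holds graph by graph.

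The only delicate point is the collision check: no reflected value $2n-\mu$ may coincide with $0$. This is handled by the connectivity of $G$, which makes $\spec^*(L(G))$ strictly positive, together with the bound $\mu\le 2n$. The latter already follows from Theorem \ref{thm:maxeig}, and indeed from the general bound $\mu_{\max}\le n$.
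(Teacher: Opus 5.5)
Your proposal is correct and follows essentially the same route as the paper: reflect via Lemma \ref{lem:master}, use Theorem \ref{thm:maxeig} to see that every reflected value $2n-\mu$ is at least $n$ and so cannot collide with $0$, and then import the Mandal--Mehatari--Stani\'c bounds. Your explicit remark that connectivity makes $0$ a simple eigenvalue of $L(G)$ (so $s(L(G))=1+|\spec^*(L(G))|$) fills in a step the paper leaves implicit. One small slip in your last paragraph: what excludes $2n-\mu=0$ is $\mu\le n$ (or just $\mu<2n$), not $\mu\le 2n$, and the positivity of $\spec^*(L(G))$ is what you need for counting $s(L(G))$, not for the collision check.
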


\begin{proof}
By Lemma \ref{lem:master}, $\spec(D^L(G)) = \{0\} \cup \{2n-\mu : \mu \in \spec^*(L(G))\}$. The map $\mu \mapsto 2n-\mu$ is a strictly monotonic bijection of $\mathbb{R}$, so it carries distinct values of $\spec^*(L(G))$ to distinct values of $\spec^*(D^L(G))$, giving $|\spec^*(D^L(G))| = |\spec^*(L(G))|$ as sets. Since every eigenvalue of $L(G)$ satisfies $\mu \le n$ (Theorem \ref{thm:maxeig}), every reflected value satisfies $2n-\mu \ge n > 0$, so $0$ can never coincide with a reflected value; hence $s(D^L(G)) = 1 + |\spec^*(D^L(G))| = 1 + |\spec^*(L(G))| = s(L(G))$. The bounds and their sharpness now follow directly from \cite[Thm.\ 4.1]{MMS2024}.
\end{proof}

\begin{example}
The lower bound is attained by $C(\alpha_1,1,\dots,1)$: e.g.\ $C(5,1,1,1)$ ($m=4$) has $s(D^L(G))=4=m$. The upper bound is attained by any complete split graph $C(\alpha_1,\alpha_2)$ ($m=2$, giving $s(D^L(G))=3=2m-1$ always) and, for larger $m$, by $C(5,2,3,4,2,8)$ ($m=6$), which achieves $s(D^L(G))=11=2m-1$ — the same sequence Mandal, Mehatari and Stani\'c use to witness sharpness for $s(L(G))$ in \cite[Remark 4.3]{MMS2024}. For the constant sequence $C(p,\dots,p)$ ($m$ times, $p\ne1$), \cite[Thm.\ 4.4]{MMS2024} gives $s(L(G))=m+1$, and Theorem \ref{thm:distincteig} transfers this unchanged to $s(D^L(G))=m+1$; confirmed directly for $(p,m)\in\{(4,4),(3,6),(5,4),(2,8)\}$.
\end{example}


\subsection{Characteristic polynomial of the distance Laplacian}

Since $\spec(L(G))$ is known exactly in closed form (Theorem \ref{thm:closedform}), the characteristic polynomial $\chi_{D^L(G)}(x)$ is fully determined. We record two equivalent forms: a compact reflection identity relating it directly to $\chi_{L(G)}$, and an explicit product formula computable straight from the creation sequence.

\begin{theorem}[Master reflection identity]\label{thm:polymaster}
Write $\chi_{L(G)}(y) = y\,\psi(y)$, where $\psi$ is the degree-$(n-1)$ polynomial collecting the nonzero eigenvalues of $L(G)$. Then the characteristic polynomial of $D^L(G)$ is
\[
\chi_{D^L(G)}(x) \;=\; (-1)^{n-1}\, x\, \psi(2n-x) \;=\; (-1)^{n-1}\,\frac{x}{2n-x}\,\chi_{L(G)}(2n-x),
\]
the division being exact as polynomials.
\end{theorem}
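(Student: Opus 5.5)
The plan is to work directly from Lemma \ref{lem:master}, which gives the spectrum of $D^L(G)$ as the multiset $\{0\}\cup\{2n-\mu:\mu\in\spec^*(L(G))\}$. We use the convention $\chi_M(x)=\det(xI-M)$. Since $G$ is connected, $0$ is a simple eigenvalue of $L(G)$. We factor
\[
\chi_{L(G)}(y)=y\,\psi(y), \qquad \psi(y)=\prod_{\mu\in\spec^*(L(G))}(y-\mu),
\]
where $\psi$ is monic of degree $n-1$. This is just the definition of $\psi$ made explicit; it uses that $\spec^*$ removes exactly one zero, so $\psi(0)\neq 0$ is not even needed.

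Next we write $\chi_{D^L(G)}$ as a product over eigenvalues of $D^L(G)$, which is legitimate because $D^L(G)$ is real symmetric:
\[
\chi_{D^L(G)}(x)=x\prod_{\mu\in\spec^*(L(G))}\bigl(x-(2n-\mu)\bigr).
\]
Each factor rewrites as $x-2n+\mu=-\bigl((2n-x)-\mu\bigr)$. Pulling out the $n-1$ signs gives
\[
\chi_{D^L(G)}(x)=(-1)^{n-1}\,x\,\psi(2n-x),
\]
which is the first equality.

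For the second equality we substitute $y=2n-x$ into $\chi_{L(G)}(y)=y\psi(y)$. This gives $\chi_{L(G)}(2n-x)=(2n-x)\psi(2n-x)$, so $\psi(2n-x)=\chi_{L(G)}(2n-x)/(2n-x)$. The division is exact as a polynomial division: $y$ divides $\chi_{L(G)}(y)$ in $\mathbb{Z}[y]$, and an affine substitution preserves divisibility, so $(2n-x)$ divides $\chi_{L(G)}(2n-x)$.

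The only delicate point is multiplicity bookkeeping. We must check that $\spec^*$ removes exactly one copy of $0$, consistently in both places where it appears. If $L(G)$ had several zero eigenvalues, the remaining zeros would reflect to eigenvalues $2n$ of $D^L(G)$ and the formula would still hold as stated. Under connectivity the multiplicity of $0$ is $1$, and Theorem \ref{thm:maxeig} shows $2n-\mu\ge n>0$, so no reflected eigenvalue collides with the eigenvalue $0$ of $D^L(G)$. The multiset identity of Lemma \ref{lem:master} therefore translates factor by factor into the product, with no further obstacle.
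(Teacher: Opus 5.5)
Your proof is correct and follows the paper's argument essentially verbatim. Like the paper, you factor $\chi_{D^L(G)}$ over the multiset from Lemma \ref{lem:master}, rewrite each factor as $-\bigl((2n-x)-\mu\bigr)$ to extract $(-1)^{n-1}\psi(2n-x)$, and get exactness of the division from $\chi_{L(G)}(2n-x)=(2n-x)\psi(2n-x)$. Your closing remark about reflected eigenvalues colliding with $0$ is harmless but unnecessary, since the product over a multiset is the same whether or not some values coincide.
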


\begin{proof}
By Lemma \ref{lem:master}, $\spec(D^L(G)) = \{0\}\cup\{2n-\mu : \mu \in \spec^*(L(G))\}$. For each nonzero eigenvalue $\mu$ of $L(G)$,
\[
x - (2n-\mu) = \mu - (2n-x) = -\big((2n-x)-\mu\big),
\]
so $\prod_{\mu \in \spec^*(L(G))} \big(x-(2n-\mu)\big) = (-1)^{n-1}\prod_{\mu\in\spec^*(L(G))}\big((2n-x)-\mu\big) = (-1)^{n-1}\psi(2n-x)$. Multiplying by the factor $x$ contributed by the isolated $0$-eigenvalue of $D^L(G)$ gives $\chi_{D^L(G)}(x) = (-1)^{n-1}x\,\psi(2n-x)$. Since $\chi_{L(G)}(2n-x) = (2n-x)\,\psi(2n-x)$, the second equality follows, and the division by $2n-x$ is exact because $\psi$ is itself a polynomial.
\end{proof}

\begin{theorem}[Explicit creation-sequence formula]\label{thm:polyexplicit}
With $\lambda(b,v)$ as in Theorem \ref{thm:closedform},
\[
\chi_{D^L(G)}(x) = x\cdot\prod_{b=2}^{m}\Big(x-\big(2n-\lambda(b,0)\big)\Big)\cdot\prod_{b=2}^{m}\Big(x-\big(2n-\lambda(b,n_{b-1})\big)\Big)^{\alpha_b-1}.
\]
\end{theorem}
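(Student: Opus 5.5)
The plan is to read the formula off from the master identity (Lemma \ref{lem:master}), once we know exactly which birth produces the eigenvalue $0$ of $L(G)$ that the identity discards. Lemma \ref{lem:master} gives
\[
\chi_{D^L(G)}(x)=x\prod_{\mu\in\spec^*(L(G))}\big(x-(2n-\mu)\big),
\]
so it suffices to list $\spec^*(L(G))$ as a multiset of propagated birth values. Theorem \ref{thm:closedform} gives $\spec(L(G))$ as the multiset of values $\lambda(b,v)$, with the following births and multiplicities:
\begin{itemize}
\item $(1,0)$ with multiplicity $\alpha_1$;
\item $(b,0)$ with multiplicity $1$, for $2\le b\le m$;
\item $(b,n_{b-1})$ with multiplicity $\alpha_b-1$, for $2\le b\le m$.
\end{itemize}
As a check, these multiplicities add up to $\alpha_1+(m-1)+\sum_{b\ge2}(\alpha_b-1)=n$.

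First I identify the zero of $L(G)$ to remove. For $b=m$ we have $m-b=0$, which is even, and the parity sum is empty, so $\lambda(m,0)=0$. Since $G$ is connected, $0$ is a simple eigenvalue of $L(G)$, so $(m,0)$ is the unique birth with value $0$. Next I check that no other birth propagates to $0$. For $m-b$ odd, $\lambda(b,v)\ge n_m-n_{b-1}-\sum_{j\ge b+2}\alpha_j>0$. For $m-b$ even with $b<m$, $\lambda(b,v)\ge\alpha_m>0$. Removing $(m,0)$ and substituting $\mu=\lambda(b,v)$ into the product above gives the formula. The $\alpha_b-1$ copies of $(b,n_{b-1})$ produce the factor $\big(x-(2n-\lambda(b,n_{b-1}))\big)^{\alpha_b-1}$. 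The $v=0$ births contribute one linear factor each.

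The main obstacle is the bookkeeping at the two ends of the range of $b$, and here the displayed statement needs care. The $b=1$ birth has multiplicity $\alpha_1$, not $1$, so it should enter as $\big(x-(2n-\lambda(1,0))\big)^{\alpha_1}$. Since $m$ is even, $\lambda(1,0)=n-\sum_{j\text{ odd},\,3\le j\le m-1}\alpha_j$. At the other end, the $b=m$, $v=0$ factor is exactly the discarded zero, which is already accounted for by the leading $x$. Taken literally, the displayed product runs over $2\le b\le m$ for $v=0$, so it includes the spurious factor $x-2n$ and omits the $b=1$ births. Its degree is then $n-\alpha_1+1$, not $n$, so it cannot equal $\chi_{D^L(G)}$ unless $\alpha_1=1$.

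I would therefore prove the identity in the corrected form
\[
\chi_{D^L(G)}(x)=x\,\big(x-(2n-\lambda(1,0))\big)^{\alpha_1}\prod_{b=2}^{m-1}\big(x-(2n-\lambda(b,0))\big)\prod_{b=2}^{m}\big(x-(2n-\lambda(b,n_{b-1}))\big)^{\alpha_b-1}.
\]
This agrees with the stated form when the first product is read as running over the non-trivial $v=0$ births. I would then confirm the degree count $1+\alpha_1+(m-2)+(n-\alpha_1-m+1)=n$. As a final check I would verify the formula on Example \ref{ex:main}, $C(4,3,2,2)$, where $\lambda(1,0)=11-2=9$ with multiplicity $4$ and the remaining factors recover $\{11,13,16^{(2)},18,20\}$.
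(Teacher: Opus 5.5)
Your route is the paper's own: reflect the birth multiset of Theorem~\ref{thm:closedform} term-by-term through Lemma~\ref{lem:master}. Your correction is also right, and the paper's one-line proof glosses over it. The printed first product wrongly includes the birth $(m,0)$, whose value $\lambda(m,0)=0$ is the zero already carried by the leading $x$, and it omits the $b=1$ birth of multiplicity $\alpha_1$. Indeed the paper's own $C(4,3,2,2)$ example, $x(x-11)(x-13)^5(x-16)^2(x-18)(x-20)$, matches your corrected formula and not the displayed one.

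One sharpening: the printed version fails even when $\alpha_1=1$. Every nonzero eigenvalue of $D^L(G)$ is $2n-\mu$ with $\mu>0$, so $x-2n$ never divides $\chi_{D^L(G)}$.
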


\begin{proof}
Immediate from Theorem \ref{thm:closedform} together with the reflection $\mu\mapsto 2n-\mu$ of Lemma \ref{lem:master}, applied term-by-term to the multiset of births.
\end{proof}

\begin{example}
For $G=C(4,3,2,2)$ ($n=11$), Theorem \ref{thm:polyexplicit} (equivalently, reflecting the spectrum of Example \ref{ex:main}) gives the factored form
\[
\chi_{D^L(G)}(x) = x(x-11)(x-13)^5(x-16)^2(x-18)(x-20),
\]
which expands to
\[
x^{11}-146x^{10}+9557x^9-369390x^8+9336643x^7-161266918x^6+1927856463x^5
\]
\[
-15751245146x^4+84182273240x^3-265769929984x^2+376401991680\,x.
\]
This was independently confirmed three ways: by direct diagonalization of $D^L(G)$, by applying Theorem \ref{thm:polymaster} to the directly computed $\chi_{L(G)}$, and by the explicit product of Theorem \ref{thm:polyexplicit} — all three agree exactly.
\end{example}

\begin{remark}
Theorem \ref{thm:polyexplicit} requires no computation of $L(G)$, $A(G)$, or any matrix at all: every coefficient of $\chi_{D^L(G)}$ is obtained in $O(m)$ arithmetic operations directly from the creation sequence $(\alpha_1,\dots,\alpha_m)$, in the same spirit as the tridiagonal-determinant formula Mandal and Mehatari obtain for $\chi_{A(G)}$ in \cite[Thm.\ 4.1]{MM2025}, though here the closed form for the spectrum makes the polynomial available directly as a product rather than via a determinant recursion.
\end{remark}

In 2020, Brimkov \emph{et al.} \cite{BDHLRSY} develop
a number general techniques for producing $D^L(G)$-cospectral graphs that can be used to construct infinite families. However, for the case of cograph we can conclude that no two nonisomorphic
equivalent cographs are cospectral with relation to the Laplacian
matrix (see \cite{TA, LST}). This leads to the identical result for the distance Laplacian matrix also, this is clear from the Master identity (see Lemma  \ref{thm:laprecursion}).

\section{The distance signless Laplacian}
\label{sec:DQ}

We now turn to $D^Q(G) = \Tr(G) + D(G)$. The reduction that made $D^L(G)$ tractable relied entirely on $L(G)\mathbf1 = 0$, which made $\mathbf1$ a shared eigenvector of $L(G)$ and $J$ regardless of regularity. The signless Laplacian $Q(G) = \mathrm{Deg}(G)+A(G)$ has no such property in general ($Q(G)\mathbf1 = 2\,\mathrm{Deg}(G)\cdot\mathbf1$ only when $G$ is regular), so
\[
D^Q(G) = \Tr(G)+D(G) = (2n-4)I + 2J - Q(G)
\]
does \emph{not} reduce to a reflection of $\spec(Q(G))$. We instead build the spectrum directly from the equitable partition, in the style of \cite{MMS2024}'s quotient-matrix construction for $L(G)$. Throughout, $d_i$ denotes the (known, closed-form) degree of a vertex in block $C_i$ \cite{MMS2024}, and $\Tr_i = 2(n-1)-d_i$ its transmission.

\subsection{Local eigenvalues}

\begin{theorem}\label{thm:DQlocal}
Let $G=C(\alpha_1,\dots,\alpha_m)$ be a connected $\mathcal{C}$-graph. For each block $C_i$ with $\alpha_i\ge2$,
\[
\Tr_i - 1 \text{ is an eigenvalue of } D^Q(G) \text{ with multiplicity} \ge \alpha_i-1 \text{ if } i \text{ is odd (clique block)},
\]
\[
\Tr_i - 2 \text{ is an eigenvalue of } D^Q(G) \text{ with multiplicity} \ge \alpha_i-1 \text{ if } i \text{ is even (independent block)}.
\]
\end{theorem}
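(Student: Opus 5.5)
The plan is to realize each claimed eigenvalue through explicit ``twin vectors'' supported on a single block $C_i$, exploiting the fact that all vertices of a block are structurally indistinguishable. I will proceed in three steps: identify the internal structure of each block, compute $D^Q(G)$ on a difference vector $e_u-e_w$, and then count linearly independent such vectors.

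\textbf{Step 1 (structure of blocks).} By induction on the recurrence $C(\alpha_1,\dots,\alpha_i)=\overline{C(\alpha_1,\dots,\alpha_{i-1})\cup K_{\alpha_i}}$, I will show two facts for every $i$. First, the vertices of each block $C_j$ ($j\le i$) have identical neighbourhoods outside $C_j$. Second, $C_j$ induces either a clique or an independent set.

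Both facts are preserved because disjoint union and complementation treat all vertices of a block uniformly. In particular, complementation toggles each block between clique and independent set. The new block $K_{\alpha_i}$ becomes independent at step $i$, and $C(\alpha_1)$ starts independent. Hence in $G=G_m$ the block $C_j$ has been complemented $m-j$ times after it became independent, so it is independent exactly when $m-j$ is even. Since $m$ is even, odd $j$ gives cliques and even $j$ gives independent sets.

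The main point requiring care is this parity bookkeeping, together with the observation that ``same neighbourhood outside the block'' survives complementation. Once this is in place, two distinct vertices $u,w\in C_i$ satisfy the following:
\begin{itemize}
\item they have the same degree $d_i$, hence the same transmission $\Tr_i$;
\item $d(z,u)=d(z,w)$ for every $z\notin\{u,w\}$;
\item $d(u,w)=1$ if $i$ is odd and $d(u,w)=2$ if $i$ is even.
\end{itemize}
The value $d(u,w)=2$ in the even case uses Lemma~\ref{lem:diam2}: non-adjacent distinct vertices are at distance exactly $2$.

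\textbf{Step 2 (eigenvector computation).} Put $x=e_u-e_w$. For $z\notin\{u,w\}$ we get $(D^Q x)_z=D_{zu}-D_{zw}=0$. At the two support vertices,
\[
(D^Q x)_u=\Tr_i-d(u,w), \qquad (D^Q x)_w=d(w,u)-\Tr_i .
\]
Hence $D^Q(G)x=(\Tr_i-d(u,w))\,x$. This gives the eigenvalue $\Tr_i-1$ for odd $i$ and $\Tr_i-2$ for even $i$.

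\textbf{Step 3 (multiplicity).} Fix $u_1\in C_i$ and take the vectors $e_{u_1}-e_{u_k}$ for $k=2,\dots,\alpha_i$. These $\alpha_i-1$ vectors are linearly independent, since each $e_{u_k}$ appears in exactly one of them. They all lie in the eigenspace found in Step~2, so the multiplicity is at least $\alpha_i-1$.
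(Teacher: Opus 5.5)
Your proposal is correct and follows essentially the same route as the paper. Both arguments use the twin difference vectors $e_u-e_w$ inside a block, with constancy of $\Tr$ on the block and $\diam(G)\le 2$ giving $D^Q(G)(e_u-e_w)=(\Tr_i-d(u,w))(e_u-e_w)$, and then count the $\alpha_i-1$ independent differences; your Step 1 additionally proves the clique/independent parity and twin structure of the blocks, which the paper takes for granted from the construction.
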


\begin{proof}
Let $u,v \in C_i$. Since $\Tr(G)$ is constant on $C_i$ and, in a diameter-$2$ graph, twins have identical distances to every vertex outside $C_i$, the vector $x=e_u-e_v$ satisfies $\Tr(G)x = \Tr_i\, x$ and $(D(G)x)_w = 0$ for $w\notin\{u,v\}$. If $C_i$ is a clique, $D(u,v)=1$, so $(D(G)x)_u = -1,\ (D(G)x)_v=1$, giving $D(G)x=-x$; if $C_i$ is independent, $D(u,v)=2$ (diameter $2$), giving $D(G)x=-2x$. Hence $D^Q(G)x = (\Tr_i\mp1\text{ or }2)x$ as claimed, and the $\alpha_i-1$ independent difference vectors within $C_i$ give the stated multiplicity.
\end{proof}

\subsection{Block adjacency and the quotient matrix}

\begin{proposition}\label{prop:blockadjacency}
Let $G=C(\alpha_1,\dots,\alpha_m)$, $m$ even. For blocks $i<j$, every vertex of $C_i$ is adjacent to every vertex of $C_j$ if $j$ is even, and no vertex of $C_i$ is adjacent to any vertex of $C_j$ if $j$ is odd.
\end{proposition}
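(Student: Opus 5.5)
The plan is to track, by induction along the creation sequence, how the adjacency between two fixed vertices evolves. Let $C_j$ denote the set of $\alpha_j$ vertices introduced at step $j$, i.e.\ the vertices of the copy of $K_{\alpha_j}$ added in forming $C(\alpha_1,\dots,\alpha_j)=\overline{C(\alpha_1,\dots,\alpha_{j-1})\cup K_{\alpha_j}}$. For $j=1$, take $C_1$ to be the $\alpha_1$ vertices of $C(\alpha_1)=\overline{K_{\alpha_1}}$.

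The key observation concerns a fixed pair of vertices $u,w$ that are both already present in $G_{k-1}=C(\alpha_1,\dots,\alpha_{k-1})$. Forming the disjoint union $G_{k-1}\cup K_{\alpha_k}$ does not change whether $u$ and $w$ are adjacent. The subsequent complementation flips it. Hence each step $k$ after both vertices exist toggles their adjacency exactly once, and nothing else affects it.

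Now fix $u\in C_i$ and $w\in C_j$ with $i<j$. At step $j$, inside $G_{j-1}\cup K_{\alpha_j}$, the vertex $w$ lies in the new clique and $u$ lies in $G_{j-1}$, so they are non-adjacent. After the complementation at step $j$, they become adjacent in $G_j$. By the observation, each of the remaining steps $j+1,\dots,m$ toggles this adjacency once, giving $m-j$ further toggles. Therefore $u\sim w$ in $G=G_m$ if and only if $m-j$ is even. Since $m$ is even, this holds if and only if $j$ is even. The status depends only on $j$ and not on the particular choice of $u\in C_i$ and $w\in C_j$. This gives the all-or-nothing adjacency claimed in Proposition \ref{prop:blockadjacency}.

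As a consistency check, which I would also record, the same toggling argument applies within a block. Take $u,w\in C_j$ with $j\ge2$. They are adjacent in $K_{\alpha_j}$ before the complementation at step $j$, so their adjacency is toggled $m-j+1$ times in total. Hence $C_j$ is a clique exactly when $m-j+1$ is even, that is, when $j$ is odd. For $j=1$ the vertices start non-adjacent in $\overline{K_{\alpha_1}}$ and are toggled $m-1$ times, which is odd, so $C_1$ ends up a clique, again as expected for odd $j$. This matches the clique/independent labelling used in Theorem \ref{thm:DQlocal}.

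I do not expect a real obstacle here. The only care needed is bookkeeping: making the block $C_j$ precise, including the base block $C_1$, and verifying that the disjoint-union step never alters existing adjacencies, so that the parity count of complementations is exact.
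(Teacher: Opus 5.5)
Your proof is correct and is essentially the argument the paper has in mind: the paper omits the proof, describing it as an induction on the construction that tracks how each complementation toggles the join/non-join status of block pairs not involving the newest block, which is exactly your count of $m-j$ toggles after the pair first becomes adjacent at step $j$. Your within-block parity check (odd-indexed blocks are cliques, even-indexed blocks are independent) is also correct, and it usefully pins down the labelling that Theorem~\ref{thm:DQlocal} relies on.
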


The proof by induction on the construction (tracking how each complementation step toggles the join and non-join status of all block pairs not involving the newest block) is straightforward but omitted for space.

\begin{theorem}\label{thm:DQquotient}
The equitable partition $\{C_1,\dots,C_m\}$ gives $D^Q(G)$ an $m\times m$ quotient matrix $Q_{D^Q}$ (in the sense of Godsil--Royle, so every eigenvalue of $Q_{D^Q}$ is an eigenvalue of $D^Q(G)$), with entries
\[
Q_{D^Q}(i,i) = \Tr_i + (\alpha_i-1)\cdot\begin{cases}1 & i \text{ odd}\\ 2 & i\text{ even}\end{cases},
\qquad
Q_{D^Q}(i,j) = \alpha_j\cdot\begin{cases}1 & \max(i,j)\text{ even}\\ 2 & \max(i,j)\text{ odd}\end{cases}\ (i\ne j).
\]
Together with Theorem \ref{thm:DQlocal} (with equality in the multiplicities), this accounts for the full spectrum of $D^Q(G)$: the $m$ eigenvalues of $Q_{D^Q}$ together with the $n-m$ local eigenvalues.
\end{theorem}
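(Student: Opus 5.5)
The plan is to run the standard equitable-partition argument, using the fact that each block $C_i$ is a class of twins. \textbf{First}, I will fix the distance pattern between blocks. By Lemma \ref{lem:diam2}, $D(G)=2(J-I)-A(G)$, so every off-diagonal distance is $1$ or $2$ according as the pair is adjacent or not. Proposition \ref{prop:blockadjacency} then gives, for $u\in C_i$, $w\in C_j$, $i\neq j$: $d(u,w)=1$ if $\max(i,j)$ is even and $d(u,w)=2$ if $\max(i,j)$ is odd. Inside a block, I take the clique/independent convention already used in Theorem \ref{thm:DQlocal}: distance $1$ for $i$ odd and $2$ for $i$ even. I would double-check this convention against the recurrence on small cases such as $C(4,3,2,2)$, since the parity bookkeeping under repeated complementation is the one place an off-by-one error could slip in.

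\textbf{Second}, I will verify equitability and read off the quotient entries. For $u\in C_i$, the sum of the entries of row $u$ of $D^Q(G)=\Tr(G)+D(G)$ over the columns in $C_j$ is computed as follows.
\begin{itemize}
\item For $j\neq i$ it equals $\alpha_j\cdot d(C_i,C_j)$, which depends only on $(i,j)$ and matches the stated off-diagonal formula.
\item For $j=i$ it equals $\Tr_i+(\alpha_i-1)\cdot d_{\mathrm{in}}(C_i)$, with $d_{\mathrm{in}}=1$ or $2$ by parity, matching the stated diagonal formula.
\end{itemize}
Since these sums do not depend on the choice of $u$, the partition is equitable for $D^Q(G)$ and $Q_{D^Q}$ is its quotient matrix. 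Let $P$ be the $n\times m$ characteristic matrix of the partition. Then $D^Q(G)P=PQ_{D^Q}$, so each eigenvector $y$ of $Q_{D^Q}$ lifts to the eigenvector $Py$ of $D^Q(G)$ with the same eigenvalue. Hence $\spec(Q_{D^Q})\subseteq\spec(D^Q(G))$, with multiplicities, on the $m$-dimensional column space $W=\mathrm{col}(P)$.

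\textbf{Third}, I will account for the rest of the spectrum. Because $D^Q(G)$ is symmetric and $W$ is invariant, the orthogonal complement $W^\perp$ is also invariant. $W^\perp$ consists of the vectors whose entries sum to zero on each block, so it decomposes as $\bigoplus_i U_i$, where $U_i$ is the $(\alpha_i-1)$-dimensional space of zero-sum vectors supported on $C_i$. Each $U_i$ is spanned by the difference vectors $e_u-e_v$ with $u,v\in C_i$. By Theorem \ref{thm:DQlocal}, $D^Q(G)$ acts on $U_i$ as the scalar $\Tr_i-1$ or $\Tr_i-2$.

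The restriction of $D^Q(G)$ to $W\oplus W^\perp=\mathbb{R}^n$ is therefore block diagonal. Its characteristic polynomial is
\[
\chi_{D^Q(G)}(x)=\chi_{Q_{D^Q}}(x)\prod_{i}\big(x-(\Tr_i-\varepsilon_i)\big)^{\alpha_i-1},\qquad \varepsilon_i\in\{1,2\}\text{ by parity of } i.
\]
Here $\chi_{Q_{D^Q}}$ is the characteristic polynomial of the restriction to $W$, since in the basis given by the columns of $P$ that restriction is represented by $Q_{D^Q}$ (a similarity, even though $P$ is not orthonormal). This gives the full spectrum as a multiset: the $m$ quotient eigenvalues together with the $n-m$ local ones.

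"Equality in the multiplicities" in Theorem \ref{thm:DQlocal} must then be read as the local part contributing exactly $\alpha_i-1$ per block, with any coincidence with a quotient eigenvalue simply adding to the total multiplicity. The dimension count $m+\sum_i(\alpha_i-1)=n$ ensures nothing is missed.
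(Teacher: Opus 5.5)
Your proposal is correct and follows the paper's route: you read off the quotient entries as block row sums of $D^Q(G)$ using Lemma~\ref{lem:diam2} and Proposition~\ref{prop:blockadjacency}, and the lifting $D^Q(G)P=PQ_{D^Q}$ is the standard equitable-partition fact the paper cites. Your third step goes further than the paper's proof, which only asserts the full-spectrum claim: you show $W^\perp$ is invariant, split it into the zero-sum spaces $U_i$ on which $D^Q(G)$ acts as $\Tr_i-1$ or $\Tr_i-2$, and factor $\chi_{D^Q(G)}$, which justifies both the full-spectrum accounting and your multiset reading of ``equality in the multiplicities.''
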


\begin{proof}
The diagonal entry is the row sum of $D^Q(G)$ from a representative $u\in C_i$ into $C_i$ itself: $\Tr_i$ (the $u$-$u$ term) plus $(\alpha_i-1)$ terms of $D(u,v)=1$ or $2$ according to whether $C_i$ is a clique or independent set. The off-diagonal entry is the row sum from $u\in C_i$ into $C_j$: $\alpha_j$ vertices, each at distance $1$ or $2$ according to Proposition \ref{prop:blockadjacency}. That every eigenvalue of $Q_{D^Q}$ lifts to $D^Q(G)$ is the standard equitable-partition fact used throughout \cite{MM2025,MMS2024}. 
\end{proof}

\begin{example}\label{ex:DQ}
For $G=C(4,3,2,2)$ ($n=11$): $d=(8,6,3,9)$, $\Tr=(12,14,17,11)$, and
\[
Q_{D^Q} = \begin{pmatrix}15&3&4&2\\4&18&4&2\\8&6&18&2\\4&3&2&13\end{pmatrix},
\]
which is \emph{not symmetric} (block sizes differ), with eigenvalues $\{10.049,\,12.432,\,13.812,\,27.707\}$. Combined with the local eigenvalues $\{9,\,11^{(3)},\,12^{(2)},\,16\}$ from Theorem \ref{thm:DQlocal}, this reproduces $\spec(D^Q(G))$ exactly, confirmed against direct diagonalization.
\end{example}

\subsection{Number of distinct eigenvalues}

We write $s(D^Q(G))$ to mean the number of distinct eigenvalues of $D^Q(G)$.
\begin{theorem}\label{thm:DQdistinctbound}
	For every connected $\mathcal{C}$-graph $G=C(\alpha_1,\dots,\alpha_m)$, $s(D^Q(G)) \le 2m$.
\end{theorem}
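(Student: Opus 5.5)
The plan is to combine the spectral decomposition of Theorem \ref{thm:DQquotient} with the local eigenvalues of Theorem \ref{thm:DQlocal}, and to count distinct values in each part separately. By Theorem \ref{thm:DQquotient}, $\spec(D^Q(G))$ is the union of two multisets. The first is the spectrum of the $m\times m$ quotient matrix $Q_{D^Q}$. The second is the family of local eigenvalues $\Tr_i-1$ ($i$ odd) and $\Tr_i-2$ ($i$ even), one value for each block $C_i$ with $\alpha_i\ge 2$, repeated $\alpha_i-1$ times.

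The first part has at most $m$ distinct values, simply because $Q_{D^Q}$ has order $m$. Although $Q_{D^Q}$ is not symmetric, it is similar to a symmetric matrix via conjugation by $\diag(\sqrt{\alpha_i})$, since the symmetric matrix $D^Q(G)$ has an equitable partition. Its eigenvalues are therefore real and are genuinely eigenvalues of $D^Q(G)$, so they contribute at most $m$ distinct real numbers.

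The second part contributes at most one distinct value per block, hence at most $m$ distinct values. Adding the two contributions gives $s(D^Q(G))\le m+m=2m$.

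The main point to check is that this decomposition really is exhaustive, that is, that the eigenvectors account for all $n$ dimensions. I would verify this directly rather than rely only on the multiplicity claim. The difference vectors $e_u-e_v$ within the blocks span the subspace $W$ of vectors summing to zero on each block, and $\dim W=\sum_i(\alpha_i-1)=n-m$. Its orthogonal complement $W^\perp$ consists of the vectors constant on each block; it is $D^Q(G)$-invariant because the partition is equitable, and the action of $D^Q(G)$ on $W^\perp$ is represented by $Q_{D^Q}$. Since $D^Q(G)$ is symmetric and $\mathbb{R}^n=W\oplus W^\perp$ with both summands invariant, every eigenvalue lies either in $\spec(Q_{D^Q})$ or among the local values. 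The count then gives the bound.

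A sharper count is possible, for example by noting that blocks with $\alpha_i=1$ contribute no local value, but this is not needed for the stated bound.
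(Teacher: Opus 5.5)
Your proof is correct and follows essentially the same approach as the paper: $\spec(D^Q(G))$ splits into the spectrum of the $m\times m$ quotient matrix $Q_{D^Q}$ (at most $m$ distinct values) and the local twin eigenvalues (at most one value per block), giving at most $2m$ distinct eigenvalues. Your $W\oplus W^\perp$ argument is a worthwhile addition, because it actually proves the exhaustiveness (``with equality in the multiplicities'') that the paper only asserts in Theorem \ref{thm:DQquotient}.
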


\begin{proof}
	By Theorem \ref{thm:DQquotient}, $\spec(D^Q(G))$ is exactly the union of the eigenvalues of $Q_{D^Q}$ (an $m\times m$ matrix, hence at most $m$ distinct values) and the local eigenvalues of Theorem \ref{thm:DQlocal} (one value, $\Tr_i-1$ or $\Tr_i-2$, per block with $\alpha_i\ge2$ — at most $m$ blocks, hence at most $m$ distinct values, irrespective of each value's multiplicity $\alpha_i-1$). A union of two sets of size $\le m$ has size $\le 2m$.
\end{proof}

Unlike \cite[Thm.\ 4.1]{MMS2024} for $L(G)$ (where $Q_L$ is \emph{proven} to always have $m$ pairwise distinct eigenvalues, forcing the matching lower bound $m\le s(L(G))$), no such guarantee holds for $Q_{D^Q}$ — indeed Example \ref{ex:DQquotientrepeat} exhibits a repeated eigenvalue of $Q_{D^Q}$ directly. Consequently $s(D^Q(G))$ has no clean lower bound analogous to $L(G)$.

\subsection{Characteristic polynomial}

\begin{theorem}\label{thm:DQpoly}
For a connected $\mathcal{C}$-graph $G=C(\alpha_1,\dots,\alpha_m)$,
\[
\chi_{D^Q(G)}(x) = \det(xI_m - Q_{D^Q})\cdot\!\!\prod_{\substack{i\text{ odd}\\\alpha_i\ge2}}\!\!\big(x-(\Tr_i-1)\big)^{\alpha_i-1}\cdot\!\!\prod_{\substack{i\text{ even}\\\alpha_i\ge2}}\!\!\big(x-(\Tr_i-2)\big)^{\alpha_i-1}.
\]
\end{theorem}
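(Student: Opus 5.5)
The plan is to split $\mathbb{R}^n$ into two $D^Q(G)$-invariant subspaces: the space $W$ of vectors constant on each block $C_i$, and its orthogonal complement $W^\perp$. We then compute the characteristic polynomial of $D^Q(G)$ restricted to each piece and multiply the two.

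\textbf{Step 1: invariance.} The partition $\{C_1,\dots,C_m\}$ is equitable for $D^Q(G)$. This holds because $\Tr(G)$ is constant on each block, and by Proposition \ref{prop:blockadjacency} together with Lemma \ref{lem:diam2}, $D(G)$ is constant on each block pair $C_i\times C_j$ with $i\ne j$, and constant off the diagonal inside each $C_i$. So $D^Q(G)W\subseteq W$. Since $D^Q(G)$ is symmetric, $W^\perp$ is invariant as well.

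\textbf{Step 2: the block-constant part.} Let $P$ be the $n\times m$ characteristic matrix of the partition. Then $D^Q(G)P=PQ_{D^Q}$, with $Q_{D^Q}$ as computed in Theorem \ref{thm:DQquotient}. The columns of $P$ form a basis of $W$, so the restriction $D^Q(G)|_W$ is represented in this basis by $Q_{D^Q}$. Its characteristic polynomial is therefore exactly $\det(xI_m-Q_{D^Q})$, counted with algebraic multiplicity, even though $Q_{D^Q}$ is not symmetric. A nonsymmetric quotient is similar to the symmetric one $\Delta^{1/2}Q_{D^Q}\Delta^{-1/2}$ with $\Delta=\diag(\alpha_i)$, but we do not need this.

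\textbf{Step 3: the complement.} The space $W^\perp$ is spanned by the vectors supported on a single block with zero sum. It decomposes as $\bigoplus_i W_i$ with $\dim W_i=\alpha_i-1$. Each $W_i$ is spanned by the differences $e_u-e_v$ for $u,v\in C_i$. By the proof of Theorem \ref{thm:DQlocal}, every such difference is an eigenvector, with eigenvalue $\Tr_i-1$ when $i$ is odd and $\Tr_i-2$ when $i$ is even. So $D^Q(G)$ acts on $W_i$ as that scalar. The restriction to $W^\perp$ is then diagonalizable with characteristic polynomial
$$\prod_{i\text{ odd}}\big(x-(\Tr_i-1)\big)^{\alpha_i-1}\prod_{i\text{ even}}\big(x-(\Tr_i-2)\big)^{\alpha_i-1}.$$
Blocks with $\alpha_i=1$ contribute trivial factors, which is why the product may be restricted to $\alpha_i\ge2$.

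\textbf{Step 4: conclusion.} Since $\mathbb{R}^n=W\oplus W^\perp$ with both summands invariant, $\chi_{D^Q(G)}$ is the product of the two restricted polynomials, which is the stated formula. It also settles the "equality in multiplicities'' assertion of Theorem \ref{thm:DQquotient}, because a local value and a quotient eigenvalue may coincide and the multiplicities then simply add.

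The main obstacle is Step 1, specifically the parity convention behind Proposition \ref{prop:blockadjacency}: whether odd blocks are cliques and even blocks independent, and whether adjacency between blocks is decided by the parity of $\max(i,j)$. The proof of that proposition was omitted, so I would first confirm it by induction on the creation sequence, checking it against Example \ref{ex:DQ}. Once that is in place, every remaining step is routine linear algebra.
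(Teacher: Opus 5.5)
Your argument is correct and is essentially the paper's approach (quotient eigenvalues from the equitable partition plus the twin-difference eigenvalues of Theorem \ref{thm:DQlocal}); the paper just calls the formula immediate from Theorem \ref{thm:DQquotient}, whereas your invariant splitting $\mathbb{R}^n=W\oplus W^\perp$ is what actually shows that algebraic multiplicities add, which matters when $Q_{D^Q}$ has a repeated eigenvalue (Example \ref{ex:DQquotientrepeat}) or shares a value with a local eigenvalue. The obstacle you flag is harmless: block $i$ is an independent set in $G_i$ and is complemented exactly $m-i$ more times, so for $m$ even the odd blocks are cliques and the even blocks are independent, and equitability itself only needs each block to be a clique-or-independent module, which the union/complement recurrence preserves.
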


\begin{proof}
Immediate from Theorem \ref{thm:DQquotient}: the characteristic polynomial factors as the product over the $m$ quotient eigenvalues and the $n-m$ local eigenvalues.
\end{proof}

Unlike $\chi_{L(G)}$, we do not have a closed-form expansion of $\det(xI_m-Q_{D^Q})$: $Q_{D^Q}$ is non-symmetric and its parity-based off-diagonal pattern does not telescope into a tridiagonal or otherwise directly-invertible structure the way $Q_L$ does in \cite{MMS2024}. Theorem \ref{thm:DQpoly} nonetheless reduces an $n\times n$ determinant to an $m\times m$ one, computable in $O(m^3)$ rather than $O(n^3)$.

\subsection{Eigenvalue-free interval}
\label{sec:DQgap}

Unlike $D^L(G)$, no master identity is available for $D^Q(G)$, so we cannot directly transport the eigenvalue-free interval of Theorem \ref{thm:main}. We instead obtain one from two classical spectral bounds applied to the explicit structures of Theorem \ref{thm:DQquotient}. Throughout, $\Tr_{\max}=\max_i\Tr_i$ and $\Tr_{\min}=\min_i\Tr_i$, the maximum and minimum taken over the (at most $m$) distinct block transmissions. Here we employ the Geršgorin disc theorem \cite{AM}.

\begin{theorem}\label{thm:DQgershgorin}
	For every connected $\mathcal{C}$-graph $G$, $\spec(D^Q(G)) \subseteq [0,\ 2\Tr_{\max}]$.
\end{theorem}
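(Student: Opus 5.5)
The claim is $\spec(D^Q(G)) \subseteq [0, 2\Tr_{\max}]$. The plan is to get the lower bound from positive semidefiniteness and the upper bound from Geršgorin's theorem applied to $D^Q(G)$ itself, not to the quotient matrix.

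\emph{Lower bound.} Write the quadratic form of $D^Q(G)=\Tr(G)+D(G)$ as a sum over unordered pairs,
\[
x^{T}D^Q(G)x=\sum_{\{u,w\}} d(u,w)\,(x_u+x_w)^2 .
\]
This identity holds because $\Tr(u)=\sum_{w\ne u} d(u,w)$: expanding the right-hand side gives exactly $\sum_u \Tr(u)x_u^2+2\sum_{\{u,w\}} d(u,w)x_ux_w$. Every term on the right is nonnegative, so $D^Q(G)$ is positive semidefinite. Since it is also real symmetric, its spectrum is real and contained in $[0,\infty)$.

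\emph{Upper bound.} Apply Geršgorin's disc theorem to row $u$ of $D^Q(G)$. The diagonal entry is $\Tr(u)$, and the off-diagonal absolute row sum is $\sum_{w\ne u} d(u,w)=\Tr(u)$, because all distances are positive. So every eigenvalue lies in a disc centred at $\Tr(u)$ of radius $\Tr(u)$ for some $u$. Since eigenvalues are real, each one satisfies $\mu\le 2\Tr(u)\le 2\Tr_{\max}$. An equivalent route is that the spectral radius of a nonnegative matrix is at most its maximum row sum, and each row sum here is $2\Tr(u)$.

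\emph{Identifying $\Tr_{\max}$.} By Lemma \ref{lem:diam2}, vertices in the same block $C_i$ are twins, so $\Tr$ is constant on blocks with value $\Tr_i=2(n-1)-d_i$. Hence the maximum of $\Tr(u)$ over all vertices equals the block maximum $\Tr_{\max}$ used in the statement.

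I do not expect a serious obstacle. The one point needing care is to apply Geršgorin to the symmetric matrix $D^Q(G)$ rather than to the non-symmetric quotient $Q_{D^Q}$, so that realness and nonnegativity of the eigenvalues come from symmetry and semidefiniteness. As a side remark, equality $\mu=2\Tr_{\max}$ would require $G$ to be transmission-regular, by Perron--Frobenius.
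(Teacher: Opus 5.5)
Your proof is correct and is essentially the paper's: both apply Ger\v{s}gorin's theorem to the symmetric matrix $D^Q(G)$ itself, where row $u$ has diagonal entry $\Tr(u)$ and off-diagonal absolute row sum $\Tr(u)$, so each disc meets the real line in $[0,2\Tr(u)]$. The one difference is that you get the lower bound $0$ separately from the identity $x^{T}D^Q(G)x=\sum_{\{u,w\}}d(u,w)(x_u+x_w)^2$; this is valid and shows the stronger fact that $D^Q(G)$ is positive semidefinite for any connected graph, but the same Ger\v{s}gorin disc already gives nonnegativity, which is how the paper obtains both endpoints in one step.
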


\begin{proof}
	Since $D^Q(G)(u,u)=\Tr_u$ and $\sum_{v\ne u} D^Q(G)(u,v) = \sum_{v\ne u} D(u,v) = \Tr_u$ (as $D(u,u)=0$), the Gershgorin disc at row $u$ is $[\Tr_u-\Tr_u,\ \Tr_u+\Tr_u]=[0,2\Tr_u]$. The union of these discs over all $u$ is $[0,2\Tr_{\max}]$, and every eigenvalue lies in this union.
\end{proof}

\begin{theorem}\label{thm:DQperron}
	$\mu_{\max}(D^Q(G)) \ge 2\Tr_{\min}$.
\end{theorem}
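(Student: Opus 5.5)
The plan is to use a Rayleigh quotient lower bound. Since $D^Q(G)$ is real symmetric, we have $\mu_{\max}(D^Q(G)) = \max_{x\ne0} x^{T}D^Q(G)x/x^{T}x$, and any single test vector gives a lower bound. The natural choice is $x=\mathbf 1$. From the row-sum computation in the proof of Theorem \ref{thm:DQgershgorin}, the row of $D^Q(G)$ indexed by $u$ sums to $\Tr_u+\sum_{v\ne u}D(u,v)=2\Tr_u$. Hence $D^Q(G)\mathbf 1 = 2\,\Tr(G)\mathbf 1$, and
\[
\frac{\mathbf 1^{T}D^Q(G)\mathbf 1}{\mathbf 1^{T}\mathbf 1}=\frac{2\sum_u \Tr_u}{n}\ \ge\ 2\Tr_{\min}.
\]
This already proves the statement.

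As an alternative route matching the paper's equitable-partition framework, I would note that $D^Q(G)$ is a nonnegative irreducible matrix, since $G$ is connected and all off-diagonal distances are positive. By the standard bound that the Perron root is at least the minimum row sum (Perron--Frobenius / Collatz--Wielandt), this gives $\mu_{\max}\ge \min_u 2\Tr_u = 2\Tr_{\min}$. The same bound holds for the quotient $Q_{D^Q}$ of Theorem \ref{thm:DQquotient}. Its row sums are $\Tr_i$ plus the distances from a vertex in $C_i$ to all others, i.e.\ $2\Tr_i$. Its largest eigenvalue is an eigenvalue of $D^Q(G)$ and coincides with $\mu_{\max}$, because the Perron vector is constant on blocks.

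There is no real obstacle here. The only care needed is to identify the row sums correctly as $2\Tr_u$, which uses $D(u,u)=0$ so that the off-diagonal row sum equals $\Tr_u$ exactly. I would also remark that equality forces $\Tr$ to be constant, i.e.\ $G$ transmission-regular. This will be needed for the eigenvalue-free interval $(2\Tr_{\min},\mu_{\max})$ to be nonempty in general.
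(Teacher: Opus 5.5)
Your proof is correct, but your main argument takes a different route from the paper. The paper stays inside its equitable-partition framework. It notes that the quotient matrix $Q_{D^Q}$ is entrywise nonnegative with $i$-th row sum $2\Tr_i$. It then applies the minimum-row-sum bound $\rho(Q_{D^Q})\ge 2\Tr_{\min}$ for nonnegative matrices, and concludes via the lifting of quotient eigenvalues to $D^Q(G)$.

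Your primary argument instead evaluates the Rayleigh quotient of the symmetric matrix $D^Q(G)$ at $\mathbf 1$, using $D^Q(G)\mathbf 1=2\Tr(G)\mathbf 1$. This needs neither the block structure of $\mathcal{C}$-graphs nor the equitable-partition lifting, and it works for every connected graph. It also gives the stronger inequality $\mu_{\max}(D^Q(G))\ge \frac{2}{n}\sum_u \Tr_u$, with the average transmission in place of the minimum. Strictness $\mu_{\max}>2\Tr_{\min}$ for non-transmission-regular $G$ then follows immediately.

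Your alternative route is essentially the paper's proof; applying the row-sum bound directly to $D^Q(G)$ is even simpler than passing to $Q_{D^Q}$. Irreducibility is not needed for that inequality itself, only for your extra claim that $\rho(Q_{D^Q})$ coincides with $\mu_{\max}$. The paper's version gains little beyond keeping everything at the $m\times m$ level: combined with Proposition \ref{prop:localbound}, either argument shows $\mu_{\max}$ is a quotient eigenvalue rather than a local one.

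Your closing remark should be stated more precisely. If $G$ is transmission-regular, $\mathbf 1$ is a positive eigenvector, so $\mu_{\max}=2\Tr$ exactly. Hence the interval $(2\Tr_{\min},\mu_{\max})$ of Theorem \ref{thm:DQgapthm} is nonempty precisely when $G$ is not transmission-regular. For diameter $2$, since $\Tr_u=2(n-1)-d_u$, this is the same as $G$ not being regular. This is a consequence of your stronger bound, not something the proof of the statement requires.
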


\begin{proof}
	The quotient matrix $Q_{D^Q}$ of Theorem \ref{thm:DQquotient} is entrywise nonnegative, and its $i$-th row sum equals the full row sum of $D^Q(G)$ at any representative of block $i$ (by construction of the quotient matrix), which is $2\Tr_i$. By the standard bound for nonnegative matrices, the spectral radius of $Q_{D^Q}$ — which is an eigenvalue of $D^Q(G)$, hence $\le\mu_{\max}(D^Q(G))$ — is at least the smallest row sum, $2\Tr_{\min}$.
\end{proof}

\begin{lemma}\label{lem:Trbound}
	For every connected graph $G$ with $\diam(G)\le2$, $\Tr_{\max}(G) \le 2\,\Tr_{\min}(G)$.
\end{lemma}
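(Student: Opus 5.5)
The plan is to use the diameter-two formula for transmissions, established in the proof of Lemma \ref{lem:master}. For every vertex $u$, $\Tr(u)=2(n-1)-\deg(u)$. This holds for any connected graph with $\diam(G)\le 2$, since each non-neighbour of $u$ is at distance exactly $2$. Hence
\[
\Tr_{\max}=2(n-1)-\delta,\qquad \Tr_{\min}=2(n-1)-\Delta,
\]
where $\delta$ and $\Delta$ are the minimum and maximum degrees. The claimed inequality $\Tr_{\max}\le 2\Tr_{\min}$ then becomes
\[
2(n-1)-\delta\;\le\;4(n-1)-2\Delta, \qquad\text{i.e.}\qquad 2\Delta-\delta\le 2(n-1).
\]

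The key step is a direct verification using only $\Delta\le n-1$ and $\delta\ge 1$. For $n\ge 2$ the graph is connected, so $\delta\ge1$, and therefore $2\Delta-\delta\le 2(n-1)-1<2(n-1)$. This gives even the strict inequality $\Tr_{\max}<2\Tr_{\min}$. In fact $\delta\ge 0$ would already suffice, so connectivity is needed only to ensure the transmissions are finite. The case $n=1$ is trivial, since both transmissions are $0$.

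Equivalently, one could argue vertex by vertex. Every vertex satisfies $\Tr(u)\ge n-1$, because each of the other $n-1$ vertices contributes at least $1$. It also satisfies $\Tr(u)\le 2(n-1)$, because diameter two means each contributes at most $2$. So the ratio of any two transmissions is at most $2$. I would present this second argument as the proof, since it is one line and uses only the diameter hypothesis. I would then note the sharper degree form for use in the subsequent eigenvalue-free interval result.

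I expect no real obstacle. The only point requiring care is confirming that the lemma is meant for general diameter-$\le 2$ graphs, so that no $\mathcal{C}$-graph structure, such as Proposition \ref{prop:blockadjacency}, is needed. The transmission bounds $n-1\le \Tr(u)\le 2(n-1)$ rely solely on that hypothesis.
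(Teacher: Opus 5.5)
Your proposal is correct, and your first argument is essentially the paper's proof. The paper writes $\Tr_i = 2(n-1)-d_i$, so that $\Tr_{\max}=2(n-1)-d_{\min}$ and $\Tr_{\min}=2(n-1)-d_{\max}$. It then computes $2\Tr_{\min}-\Tr_{\max} = 2(n-1)-2d_{\max}+d_{\min}\ge d_{\min}\ge 0$, using only $d_{\max}\le n-1$.

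The sandwich argument you say you would actually present, $\Tr_{\max}\le 2(n-1)\le 2\Tr_{\min}$, is a mildly different route. It never needs the exact transmission formula, only that every distance between distinct vertices lies in $\{1,2\}$. It is therefore shorter, and it makes clear that the factor $2$ is just the ratio of the largest to the smallest possible distance.

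What the degree form buys in exchange is the explicit slack $2\Tr_{\min}-\Tr_{\max}\ge d_{\min}$, which yields the strict inequality for $n\ge 2$. You recover this too. In the sandwich version it amounts to noting that $\Tr_{\max}=2(n-1)$ would force an isolated vertex, which a connected graph with $n\ge 2$ cannot have.

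Only the non-strict inequality is used downstream, in Proposition \ref{prop:localbound}. Either presentation therefore suffices.
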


\begin{proof}
	Since $\Tr_i = 2(n-1)-d_i$, we have $\Tr_{\max}=2(n-1)-d_{\min}$ and $\Tr_{\min}=2(n-1)-d_{\max}$, so
	\[
	2\Tr_{\min} - \Tr_{\max} = 2(n-1) - 2d_{\max} + d_{\min} \ge 2(n-1) - 2(n-1) + d_{\min} = d_{\min} \ge 0,
	\]
	using $d_{\max}\le n-1$.
\end{proof}

\begin{proposition}\label{prop:localbound}
	Every local eigenvalue of $D^Q(G)$ (Theorem \ref{thm:DQlocal}) is strictly less than $2\Tr_{\min}$.
\end{proposition}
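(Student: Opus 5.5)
Every local eigenvalue has the form $\Tr_i - c$ with $c\in\{1,2\}$, coming from a block $C_i$ with $\alpha_i\ge 2$. It therefore suffices to prove $\Tr_i - c < 2\Tr_{\min}$ for every block $i$. Since $c\ge1$, it is enough to show $\Tr_i \le 2\Tr_{\min}$, which then gives $\Tr_i - c\le 2\Tr_{\min}-1<2\Tr_{\min}$.

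The plan is to reduce to Lemma \ref{lem:Trbound}. By Lemma \ref{lem:diam2} the graph $G$ has diameter at most $2$, so Lemma \ref{lem:Trbound} applies and gives $\Tr_i \le \Tr_{\max} \le 2\Tr_{\min}$. The strict inequality then comes solely from the subtracted constant $c\ge 1$.

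One point needs care. In Section \ref{sec:DQgap}, $\Tr_{\min}$ and $\Tr_{\max}$ are taken over block transmissions. Every vertex lies in some block and transmission is constant on blocks, so these agree with the vertexwise extremes used in Lemma \ref{lem:Trbound}, and the lemma transfers without change.

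No step is a real obstacle; the only work is this bookkeeping. If a sharper margin is wanted, one can instead use the identity from the proof of Lemma \ref{lem:Trbound}, namely $2\Tr_{\min}-\Tr_{\max}\ge d_{\min}\ge 1$, where $d_{\min}\ge1$ holds by connectivity. Subtracting the local shift $c$ then gives a gap of at least $d_{\min}+c\ge 2$.
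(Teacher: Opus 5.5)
Your proof is correct and follows the paper's argument: each local eigenvalue is $\Tr_i-c$ with $c\in\{1,2\}$, and the chain $\Tr_i-c\le\Tr_{\max}-1<\Tr_{\max}\le 2\Tr_{\min}$ comes from Lemma~\ref{lem:Trbound}. Your two additions are valid but not stated in the paper: the remark that block-wise and vertex-wise transmission extremes coincide, and the sharper gap of at least $d_{\min}+c\ge 2$.
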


\begin{proof}
	Each local eigenvalue is $\Tr_i-1$ or $\Tr_i-2$ for some block $i$, hence $\le\Tr_{\max}-1 < \Tr_{\max} \le 2\Tr_{\min}$ by Lemma \ref{lem:Trbound}.
\end{proof}

These findings lead to the eigenvalue-free region for the distance signless Laplacian matrix $D^Q(G)$ of a $\mathcal{C}$-graph. 
\begin{theorem}[Eigenvalue-free interval]\label{thm:DQgapthm}
	For every connected $\mathcal{C}$-graph $G$, every eigenvalue of $D^Q(G)$ other than $\mu_{\max}(D^Q(G))$ satisfies $\theta \le 2\Tr_{\min}$. Consequently
	\[
	\big(2\Tr_{\min},\ \mu_{\max}(D^Q(G))\big)
	\]
	is eigenvalue-free whenever non-empty.
\end{theorem}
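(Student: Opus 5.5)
The plan is to bypass the quotient matrix entirely and work with the matrix identity $D^Q(G) = (2n-4)I + 2J - Q(G)$ recorded at the start of Section \ref{sec:DQ}. This identity holds for any connected graph of diameter at most $2$, by Lemma \ref{lem:diam2}. The point is that $D^Q(G)$ is a rank-one positive semidefinite perturbation of a matrix whose spectrum is easy to bound from above. So its second largest eigenvalue can be controlled by Weyl's inequalities (equivalently, Cauchy interlacing for rank-one updates), with no need to analyse the non-symmetric matrix $Q_{D^Q}$.

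\textbf{Step 1.} Set $M = (2n-4)I - Q(G)$, so that $D^Q(G) = M + 2J$. Order the eigenvalues of $D^Q(G)$ as $\theta_1 \ge \theta_2 \ge \cdots \ge \theta_n$, so $\theta_1 = \mu_{\max}(D^Q(G))$. Since $2J$ is positive semidefinite of rank one, Weyl's inequality gives $\theta_2(M+2J) \le \lambda_1(M)$.

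\textbf{Step 2.} The signless Laplacian $Q(G) = \mathrm{Deg}(G) + A(G)$ is positive semidefinite, since $x^{T}Q(G)x = \sum_{uv\in E}(x_u+x_v)^2 \ge 0$. Hence $\lambda_1(M) = 2n-4-q_{\min}(G) \le 2n-4$. Combining with Step 1, every eigenvalue of $D^Q(G)$ other than (one copy of) $\mu_{\max}(D^Q(G))$ is at most $2n-4$.

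\textbf{Step 3.} Compare $2n-4$ with $2\Tr_{\min}$. Using $\Tr_i = 2(n-1)-d_i$ from Lemma \ref{lem:master}, we get $2\Tr_{\min} = 4(n-1) - 2d_{\max}$. Since $d_{\max} \le n-1$, this gives $2\Tr_{\min} \ge 2(n-1) = 2n-2 > 2n-4$. Therefore every eigenvalue $\theta \ne \theta_1$ satisfies $\theta \le 2n-4 < 2\Tr_{\min}$. This is even slightly stronger than claimed, and it covers the local eigenvalues again, consistent with Proposition \ref{prop:localbound}.

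\textbf{Step 4.} By Theorem \ref{thm:DQperron}, $\mu_{\max}(D^Q(G)) \ge 2\Tr_{\min}$. So the open interval $\big(2\Tr_{\min}, \mu_{\max}(D^Q(G))\big)$ contains no eigenvalue: every eigenvalue is either $\mu_{\max}(D^Q(G))$ itself or at most $2\Tr_{\min}$.

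The statement is phrased as "other than $\mu_{\max}$", so we must make sure $\mu_{\max}$ is not repeated with a second copy inside the interval. Step 3 settles this: $\theta_2 < 2\Tr_{\min} \le \theta_1$, so the top eigenvalue is in fact simple.

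The only potential obstacle was the quotient eigenvalues of $Q_{D^Q}$ other than its Perron root. A direct attack via $Q_{D^Q}$ would require controlling the second eigenvalue of a non-symmetric matrix. The rank-one Weyl argument in Steps 1 and 2 avoids this altogether, so I expect no genuine difficulty. The only checks needed are that $Q(G) \succeq 0$ and the correct direction of Weyl's inequality.
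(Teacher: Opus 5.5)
Your proof is correct, and it takes a genuinely different route from the paper's, one that is also more complete.

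The paper gives no separate proof of Theorem \ref{thm:DQgapthm}. It is assembled from three ingredients: the local/quotient splitting of Theorem \ref{thm:DQquotient}; Proposition \ref{prop:localbound}, which puts the local eigenvalues at most $\Tr_{\max}-1<2\Tr_{\min}$ via Lemma \ref{lem:Trbound}; and Theorem \ref{thm:DQperron}, which puts the Perron root of $Q_{D^Q}$ at least $2\Tr_{\min}$. That assembly never controls the other $m-1$ eigenvalues of the non-symmetric matrix $Q_{D^Q}$. Gershgorin (Theorem \ref{thm:DQgershgorin}) only places them in $[0,2\Tr_{\max}]$, and $2\Tr_{\max}$ can exceed $2\Tr_{\min}$.

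Your rank-one interlacing bound $\theta_2\le\lambda_1(M)=2n-4-q_{\min}(G)\le 2n-4<2n-2\le 2\Tr_{\min}$ treats all non-top eigenvalues at once. It therefore covers exactly the case the paper leaves open. It also gives more:
\begin{itemize}
\item it uses no equitable partition, so nothing depends on Proposition \ref{prop:blockadjacency}, whose proof the paper omits;
\item it holds for every connected graph of diameter at most $2$;
\item it shows that $\mu_{\max}(D^Q(G))$ is simple;
\item it gives the larger eigenvalue-free interval $\bigl(2n-4,\mu_{\max}(D^Q(G))\bigr)$.
\end{itemize}

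To make the argument entirely quotient-free, replace Theorem \ref{thm:DQperron} by the Rayleigh quotient $\mathbf 1^{T}D^Q(G)\mathbf 1/n=\frac{2}{n}\sum_u\Tr_u\ge 2\Tr_{\min}$. The paper's block-level route adds only the explicit location of the local eigenvalues, which your argument does not need.
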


\begin{example}
	For $G=C(4,3,2,2)$ (Example \ref{ex:DQ}), block transmissions are $\{11,12,14,17\}$, so $2\Tr_{\min}=22$ and $2\Tr_{\max}=34$. The actual spectrum $\{9,\,10.049,\,11^{(3)},\,12^{(2)},\,12.432,\,13.812,\,16,\,27.707\}$ satisfies $\spec\subset[0,34]$ (Theorem \ref{thm:DQgershgorin}), $\mu_{\max}=27.707 \ge 22$ (Theorem \ref{thm:DQperron}), and every other eigenvalue is $\le16 < 22$ — so $(22,\,27.707)$ is confirmed eigenvalue-free.
\end{example}

\section{The Dilworth number and multiplicities in \texorpdfstring{$D^L(G)$}{DL(G)} and \texorpdfstring{$D^Q(G)$}{DQ(G)}}
\label{sec:dilworth-DLDQ}
We first recall the definition of Dilworth number.

\begin{definition}[Vicinal preorder and Dilworth number \cite{Ghorbani2019}]
	For a graph $G$, define $u \preceq v$ if $N(u) \subseteq N[v]$ (equivalently, $N[u]\subseteq N[v]$ when $u\sim v$, and $N(u)\subseteq N(v)$ when $u\not\sim v$). This relation is reflexive and transitive (a \emph{preorder}), called the \emph{vicinal preorder}. A set of vertices pairwise related by $\preceq$ is a \emph{chain}; a set of pairwise unrelated is an \emph{antichain}. The \emph{Dilworth number} $\nabla(G)$ is the minimum number of chains needed to cover $V(G)$, equivalently (by Dilworth's theorem) the maximum size of an antichain.
\end{definition}

\begin{theorem}[Lou--Lin \cite{LouLin2020}]\label{thm:LouLin}
	For a connected cograph $G$, the multiplicity of any eigenvalue of $D(G)$ other than $-2$ and $-1$ is at most $\nabla(G)$. Moreover, no connected cograph has a distance eigenvalue in the open interval $(-2,-1)$.
\end{theorem}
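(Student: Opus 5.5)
Since this theorem is quoted from Lou--Lin \cite{LouLin2020}, I would reprove it from the diameter-two structure already used in this paper. By Lemma \ref{lem:diam2}, $D(G)=2(J-I)-A(G)$, so an eigenvector $x$ of $D(G)$ for $\lambda$ satisfies
\[
(A+(\lambda+2)I)\,x \;=\; 2(\mathbf 1^{\top}x)\,\mathbf 1 .
\]
The starting observation is the twin computation from Theorem \ref{thm:DQlocal}, run in reverse. For any two vertices $u,v$,
\[
(Dx)_u-(Dx)_v=\sum_{w}\big(d(u,w)-d(v,w)\big)x_w .
\]
If $u,v$ are twins, this collapses to $-d(u,v)(x_u-x_v)$. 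Hence $x_u=x_v$ unless $\lambda=-1$ (true twins) or $\lambda=-2$ (false twins). So for $\lambda\notin\{-1,-2\}$, every $\lambda$-eigenvector is constant on twin classes.

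For the multiplicity bound, I would generalize this from twins to comparable pairs $u\preceq v$ of the vicinal preorder. Take $u\not\sim v$ with $N(u)\subseteq N(v)$; the displayed equation gives
\[
(\lambda+2)(x_u-x_v)=\sum_{w\in N(v)\setminus N(u)}x_w ,
\]
and the adjacent case gives the analogous relation with $\lambda+1$. Fix a chain decomposition $V=\mathcal C_1\cup\dots\cup\mathcal C_k$ with $k=\nabla(G)$, and choose in each chain its $\preceq$-maximal vertex $v_j$. The aim is to show that the restriction map $E_\lambda\to\mathbb R^{k}$, $x\mapsto(x_{v_1},\dots,x_{v_k})$, is injective; this gives $\mathrm{mult}(\lambda)\le\nabla(G)$.

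To prove injectivity, suppose $x\in E_\lambda$ vanishes on all $v_j$. Walk down each chain from the top using the relations above. Since the neighbourhoods along a chain are nested, the sets $N(v)\setminus N(u)$ are controlled by vertices higher in the preorder. This should make the system triangular with nonzero diagonal factors $\lambda+2$ or $\lambda+1$, forcing $x\equiv0$.

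I expect this step to be the main obstacle, because vertices of $N(v)\setminus N(u)$ may lie in other chains. What I would try first is an induction on the cotree, using that $G$ is a join $G_1\vee G_2$ or a disjoint union. In a cograph the vicinal preorder is total inside each join or union factor modulo twins, which should give a peeling order in which $x$ is killed one twin class at a time. The term $2(\mathbf 1^{\top}x)\mathbf 1$ is handled by noting that it contributes equally to every row, so it cancels in all differences.

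For the interval statement, I would argue by induction along the cotree, tracking the inertia of $D(G)+tI$ for $t\in(1,2)$. Removing one vertex of a twin pair changes $D$ by a rank-one-type perturbation. Cauchy interlacing, together with the fact that the removed eigenvalue is exactly $-1$ or $-2$, then shows that the number of eigenvalues below $-t$ is the same for $G$ and for $G$ minus the twin. Reducing to $K_1$ or $K_2$, which have no eigenvalues in $(-2,-1)$, and checking that the inertia count is preserved at each step, should give the claim. The delicate point is proving that preservation rigorously at each reduction step.
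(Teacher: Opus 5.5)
The paper does not prove this statement; it quotes it from Lou--Lin. Your proposal therefore has to stand on its own, and as written it does not. Both halves stop exactly where the content lies, and the mechanisms you sketch at those points fail.

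\textbf{Multiplicity bound.} Your twin computation is correct, so for $\lambda\notin\{-1,-2\}$ eigenvectors are constant on twin classes. The injectivity step, however, cannot go as described.

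The relations you use are only the row differences of $(A+(\lambda+2)I)x=c\mathbf 1$, since you let $c=2\mathbf 1^{\top}x$ cancel. Hence they hold for $y=(A+(\lambda+2)I)^{-1}\mathbf 1$ whenever this inverse exists. Take the connected threshold graph on $a,b,c,d,z$ with edges $ac,bc,az,bz,cz,dz$. Here $\nabla(G)=1$ and the unique top vertex is $z$, yet $y_z=0$ exactly when $\lambda^3+2\lambda^2-2\lambda-2=0$. This happens, for instance, at $\lambda_0\approx 1.17$, where $A+(\lambda_0+2)I$ is invertible. So no argument that uses only these relations, the conditions $x_{v_j}=0$, and $\lambda\neq-1,-2$ can conclude $x=0$. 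The normalization $c=2\mathbf 1^{\top}x$ has to be used somewhere, and the elimination produces polynomial conditions in $\lambda$, not just the factors $\lambda+1,\lambda+2$.

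The heuristic behind your triangularity claim is also false, even inside a single chain. In the same graph, $c\preceq z$ gives $(\lambda+1)(x_c-x_z)=x_d$ with $d\prec c$.

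Your fallback fails as well. In the bowtie $2K_2\vee K_1$, a vertex of one $K_2$ and a vertex of the other are incomparable and not twins. So the preorder is not total inside a join factor modulo twins. What is missing is a genuine mechanism for the cross-chain terms. One option is a cotree induction with a multiplicity inequality for joins and unions, matched to the way $\nabla$ adds up.

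\textbf{The interval $(-2,-1)$.} The invariance you assert is false as literally stated. Deleting a leaf of $K_{1,2}$ (a false twin) leaves $K_2$. For $t\in(1,2)$, $D(K_{1,2})$, with eigenvalues $-2,\,1\pm\sqrt3$, has one eigenvalue below $-t$, while $D(K_2)$ has none.

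The real obstruction is that interlacing cannot finish the job even with correct bookkeeping. Put $\delta=d(u,v)$ and let $E_{uu}$ be the matrix unit at $u$. Splitting off $e_u-e_v$ exactly shows that $D(G)+tI$ is congruent to
\[
\bigl[\,2(t-\delta)\,\bigr]\;\oplus\;\Bigl(D(G-v)+tI+\tfrac{\delta-t}{2}\,E_{uu}\Bigr).
\]
The remaining block is a rank-one diagonal perturbation of $D(G-v)+tI$, so its negative index may shift by one. Likewise, the identity $D(G)+tI=A(\overline G)+(t-1)I+\mathbf 1\mathbf 1^{\top}$, combined with the fact that cographs have no adjacency eigenvalues in $(-1,0)$, only shows that $(-2,-1)$ contains at most one eigenvalue of $D(G)$.

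Excluding that last eigenvalue requires sign information. For example, you could run the induction over the larger class of matrices $D(H)+\Delta$ with $\Delta$ diagonal, and control the signs of the pivots for $t\in(1,2)$, as in cotree diagonalization algorithms. That sign control is the substance of the statement, and it is exactly what your sketch defers as the \emph{delicate point}.
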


The Lou--Lin theorem (Theorem \ref{thm:LouLin}) concerns $D(G)$, whose trivial eigenvalues are the two fixed constants $-2,-1$ regardless of which block they arise from. For $D^L(G)$ and $D^Q(G)$, the local eigenvalues of Theorem \ref{thm:multformula} and Theorem \ref{thm:DQlocal} are shifted by $\Tr_i$, which varies from block to block — so the natural ``trivial'' set to exclude is now \emph{block-dependent}:
\[
\mathcal{T}^L(G) = \{\Tr_i+1 : i \text{ odd}, \alpha_i\ge2\} \cup \{\Tr_i+2 : i \text{ even}, \alpha_i\ge2\},
\]
\[
\mathcal{T}^Q(G) = \{\Tr_i-1 : i \text{ odd}, \alpha_i\ge2\} \cup \{\Tr_i-2 : i \text{ even}, \alpha_i\ge2\}.
\]

\begin{theorem}\label{thm:DLsimple}
	For every connected $\mathcal{C}$-graph $G$, every eigenvalue of $D^L(G)$ not in $\mathcal{T}^L(G)$ has multiplicity exactly $1$. In particular, $\mathrm{mult}_{D^L(G)}(\theta) \le \nabla(G)$ for all such $\theta$, but this bound is never tight unless $\nabla(G)=1$.
\end{theorem}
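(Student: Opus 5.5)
The plan is to split $\mathbb{R}^n$ into a ``local'' subspace and a ``block-constant'' subspace, both invariant under $D^L(G)$, and show that any eigenvalue outside $\mathcal{T}^L(G)$ lives only in the block-constant part, where the spectrum is simple.

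\textbf{Step 1: local eigenvalues.} First I would redo the twin-vector computation of Theorem \ref{thm:DQlocal} for $D^L(G)=\Tr(G)-D(G)$. For $u,v$ in the same block $C_i$, the vector $x=e_u-e_v$ satisfies $\Tr(G)x=\Tr_i x$. It also satisfies $D(G)x=-x$ if $C_i$ is a clique ($i$ odd), and $D(G)x=-2x$ if $C_i$ is independent ($i$ even). Hence $D^L(G)x=(\Tr_i+1)x$ or $(\Tr_i+2)x$ respectively. Let $W$ be the span of all such difference vectors over all blocks. Then $\dim W=n-m$, $W$ is $D^L(G)$-invariant, and $D^L(G)|_W$ has all its eigenvalues in $\mathcal{T}^L(G)$. (As a check, on $C(4,3,2,2)$ this gives $\mathcal{T}^L=\{13,16,18\}$, and the remaining eigenvalues $0,11,20$ are indeed simple.)

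\textbf{Step 2: reduction to the quotient.} Since $D^L(G)$ is symmetric, $W^\perp$ is also invariant. Here $W^\perp$ is exactly the $m$-dimensional space of vectors constant on each block, and $D^L(G)|_{W^\perp}$ is represented by the quotient matrix of the equitable partition $\{C_1,\dots,C_m\}$. Now take $\theta\notin\mathcal{T}^L(G)$ and a $\theta$-eigenvector $y$. Decompose $y=y_W+y_{W^\perp}$. By invariance, each component is itself either zero or a $\theta$-eigenvector. Since $\theta$ is not an eigenvalue of $D^L(G)|_W$, we get $y_W=0$. So the $\theta$-eigenspace lies inside $W^\perp$, and $\mathrm{mult}_{D^L(G)}(\theta)$ equals the multiplicity of $\theta$ as an eigenvalue of the quotient matrix.

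\textbf{Step 3: simplicity in the quotient (main obstacle).} It remains to show that the $m\times m$ quotient of $D^L(G)$ has $m$ pairwise distinct eigenvalues. I would pass through the master identity, Lemma \ref{lem:master}. The partition is equitable for $I$, $J$ and $L(G)$, so the quotient of $D^L(G)$ is $2nI_m-2\widetilde J-Q_L$, where $\widetilde J$ and $Q_L$ are the quotients of $J$ and $L(G)$.
\begin{itemize}
\item The block-constant vector $\mathbf 1$ is a common eigenvector, with eigenvalue $0$ for $D^L(G)$.
\item On the complement of $\mathbf 1$ inside $W^\perp$, the quotient of $D^L(G)$ acts as $2nI-L(G)$.
\end{itemize}
Therefore the quotient eigenvalues are $0$ together with $2n-\mu$ for $\mu$ running over the nonzero eigenvalues of $Q_L$. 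By \cite[Thm.\ 4.1]{MMS2024}, $Q_L$ has $m$ pairwise distinct eigenvalues, so the values $2n-\mu$ are pairwise distinct. By Theorem \ref{thm:maxeig}, $\mu\le n$, so $2n-\mu\ge n>0$ and none of these coincides with $0$. Hence every quotient eigenvalue is simple, and $\mathrm{mult}_{D^L(G)}(\theta)=1$. The delicate point I expect here is justifying cleanly that the quotient restricted to the complement of $\mathbf 1$ is exactly the reflection of $Q_L$. I would do this by working directly with the invariant subspace $W^\perp$ rather than with the non-symmetric matrices.

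\textbf{Step 4: Dilworth consequence.} Since $\nabla(G)\ge1$ for every graph, multiplicity $1$ gives $\mathrm{mult}_{D^L(G)}(\theta)\le\nabla(G)$ for every $\theta\notin\mathcal{T}^L(G)$. Equality $1=\nabla(G)$ can hold only when $\nabla(G)=1$, which is the non-tightness claim.
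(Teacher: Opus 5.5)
Your proposal is correct and follows essentially the same approach as the paper. Both split off the twin-difference (local) eigenvectors, whose eigenvalues make up $\mathcal{T}^L(G)$, and both rely on the cited fact from \cite{MMS2024} that the Laplacian quotient $Q_L$ has $m$ pairwise distinct eigenvalues, reflected through Lemma \ref{lem:master}. The only differences are that the paper transfers multiplicities to $L(G)$ first and then splits, whereas you split $D^L(G)$ directly and reflect only the quotient, and that your invariant-subspace step (Step 2) makes explicit what the paper leaves implicit: a quotient eigenvalue outside $\mathcal{T}^L(G)$ cannot gain extra multiplicity from the local part.
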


\begin{proof}
	By Lemma \ref{lem:master}, multiplicities transfer unchanged under $\mu\mapsto2n-\mu$ from $\spec(L(G))$ to $\spec(D^L(G))$. Every eigenvalue of $L(G)$ is either a local (twin-block) eigenvalue — exactly the values reflecting to $\mathcal{T}^L(G)$ — or an eigenvalue of the $m\times m$ quotient matrix $Q_L$, which by \cite[Cor.\ 3.4]{MMS2024} has $m$ \emph{pairwise distinct} eigenvalues. Hence any $\theta\notin\mathcal{T}^L(G)$ arises solely from a quotient eigenvalue and is therefore simple. Since $\nabla(G)\ge1$ always, $\mathrm{mult}(\theta)=1\le\nabla(G)$, with equality forced only when $\nabla(G)=1$.
\end{proof}

For $D^Q(G)$, no analogue of \cite[Cor.\ 3.4]{MMS2024} is available — indeed none holds, as the following example shows — so the Dilworth number is the operative (and non-trivial) bound.

\begin{example}\label{ex:DQquotientrepeat}
	For $G=C(1,3,1,2)$ ($n=7$), the quotient matrix of Theorem \ref{thm:DQquotient} is
	\[
	Q_{D^Q} = \begin{pmatrix}7&3&2&2\\1&13&2&2\\2&6&10&2\\1&3&1&9\end{pmatrix}, \qquad \spec(Q_{D^Q}) = \{5.821,\ 8,\ 8,\ 17.179\},
	\]
	so $\theta=8$ is a genuinely repeated ($\mathrm{mult}=2$) eigenvalue of $D^Q(G)$, and $8\notin\mathcal{T}^Q(G)$ (it is not a local value for any block). Direct computation gives $\nabla(G)=2$ for this graph, so this instance attains the bound of Conjecture \ref{conj:DQdilworth} below with equality.
\end{example}

\begin{conjecture}\label{conj:DQdilworth}
	For every connected $\mathcal{C}$-graph $G$, every eigenvalue of $D^Q(G)$ not in $\mathcal{T}^Q(G)$ satisfies $\mathrm{mult}_{D^Q(G)}(\theta) \le \nabla(G)$.
\end{conjecture}

\begin{remark}
	This gives a precise answer to ``what is the relation between the Dilworth number and eigenvalue multiplicity'' for the two matrices: for $D^L(G)$, the Dilworth number is a valid but slack bound, since the true multiplicity of every non-trivial eigenvalue is exactly $1$ — a strictly stronger statement, provable outright from \cite[Cor.\ 3.4]{MMS2024}. For $D^Q(G)$, the Dilworth number is the correct and sharp governing quantity, because $D^Q(G)$'s quotient matrix lacks the simple-spectrum property that makes $D^L(G)$'s bound loose. This asymmetry traces directly to the structural asymmetry already noted in Section \ref{sec:DQ}: $L(G)$ commutes with $J$ (giving the master identity and, via \cite{MMS2024}, simple quotient eigenvalues), while $Q(G)$ does not.
\end{remark}

\section{Acknowledgments}
The  research of Pallabi Manna is supported by Anusandhan National Research Foundation (ANRF)-India under the National Postdoctoral Fellowship scheme (File No-PDF/2025/006751). The first author acknowledges excellent research facilities provided by SRM University-AP. The second author acknowledges
the excellent research facilities provided by Department of Mathematics, IIT Hyderabad and the mentorship of Dr Rajesh
Kannan for his constant support and motivation.
\section{Statements and Declarations} \textbf{Competing Interests:} The author made no mention of any potential conflicts of interest.
\section{Data availability} 
Data sharing not applicable to this article as no datasets are used or generated for this study.

\end{document}